\documentclass[11pt,reqno]{amsart}
\usepackage[T1]{fontenc}
\usepackage{lmodern}
\usepackage[a4paper,margin=30mm]{geometry}
\usepackage{amsmath,amssymb,amsthm,mathtools,mathrsfs,bm}
\usepackage{enumitem}
\usepackage{booktabs,tabularx,array}
\usepackage{aliascnt}
\usepackage{microtype}
\usepackage{xcolor}
\usepackage[backend=biber,style=numeric,sorting=nyt,maxbibnames=99]{biblatex}
\usepackage[unicode,colorlinks=true,linkcolor=blue!55!black,
  citecolor=blue!55!black,urlcolor=blue!55!black]{hyperref}
\usepackage[nameinlink,noabbrev]{cleveref}

\allowdisplaybreaks
\numberwithin{equation}{section}

\newcommand{\R}{\mathbb R}
\newcommand{\C}{\mathbb C}
\newcommand{\HH}{\mathbb H}
\newcommand{\OO}{\mathbb O}
\newcommand{\K}{\mathbb K}
\newcommand{\Id}{I}
\newcommand{\tr}{\operatorname{tr}}

\newcommand{\Sym}{\operatorname{Sym}}
\newcommand{\Ran}{\operatorname{Ran}}
\newcommand{\Ker}{\operatorname{Ker}}
\newcommand{\Span}{\operatorname{span}}
\newcommand{\dist}{\operatorname{dist}}

\newcommand{\sech}{\operatorname{sech}}
\newcommand{\Spin}{\operatorname{Spin}}
\newcommand{\csch}{\operatorname{csch}}
\newcommand{\dd}{\,\mathrm d}
\newcommand{\dv}{\,\mathrm dv}
\newcommand{\ds}{\,\mathrm d\sigma}
\newcommand{\ip}[2]{\left\langle #1,#2\right\rangle}
\newcommand{\norm}[1]{\left\lVert #1\right\rVert}
\newcommand{\cA}{\mathcal A}
\newcommand{\cC}{\mathcal C}
\newcommand{\cH}{\mathcal H}
\newcommand{\cV}{\mathcal V}

\newcommand{\eps}{\varepsilon}
\newcommand{\sph}{\mathbb S}
\newcommand{\RP}{\mathbb{R}\mathrm P}
\newcommand{\CP}{\mathbb{C}\mathrm P}
\newcommand{\HP}{\mathbb{H}\mathrm P}
\newcommand{\OP}{\mathbb{O}\mathrm P}
\newcommand{\RH}{\mathbb{R}\mathrm H}
\newcommand{\CH}{\mathbb{C}\mathrm H}
\newcommand{\QH}{\mathbb{H}\mathrm H}
\newcommand{\OH}{\mathbb{O}\mathrm H}

\theoremstyle{plain}
\newtheorem{theorem}{Theorem}[section]

\newaliascnt{proposition}{theorem}
\newtheorem{proposition}[proposition]{Proposition}
\aliascntresetthe{proposition}

\newaliascnt{lemma}{theorem}
\newtheorem{lemma}[lemma]{Lemma}
\aliascntresetthe{lemma}

\newaliascnt{corollary}{theorem}
\newtheorem{corollary}[corollary]{Corollary}
\aliascntresetthe{corollary}

\newaliascnt{conjecture}{theorem}

\aliascntresetthe{conjecture}

\theoremstyle{definition}
\newaliascnt{definition}{theorem}
\newtheorem{definition}[definition]{Definition}
\aliascntresetthe{definition}

\newaliascnt{assumption}{theorem}
\newtheorem{assumption}[assumption]{Assumption}
\aliascntresetthe{assumption}

\theoremstyle{remark}
\newaliascnt{remark}{theorem}
\newtheorem{remark}[remark]{Remark}
\aliascntresetthe{remark}

\crefname{theorem}{theorem}{theorems}
\crefname{proposition}{proposition}{propositions}
\crefname{lemma}{lemma}{lemmas}
\crefname{corollary}{corollary}{corollaries}
\crefname{definition}{definition}{definitions}
\crefname{assumption}{assumption}{assumptions}
\crefname{remark}{remark}{remarks}
\crefname{section}{section}{sections}

\title[Coupled reciprocal sums in rank-one symmetric spaces]
{Coupled reciprocal-sum inequalities for Neumann eigenvalues\\
 in rank-one symmetric spaces}

\author{Yifeng Meng} \thanks{}
\address{School of Mathematical Sciences,
Fudan University, Shanghai, 200433, China}
\email{yfmeng23@m.fudan.edu.cn}

\subjclass[2020]{Primary 35P15; Secondary 49R05, 53C35, 58J50}
\keywords{Neumann eigenvalues, isoperimetric inequality, reciprocal eigenvalue sum,
rank-one symmetric space, Weinberger trial functions, matrix Jensen inequality}

\begin{document}

\begin{abstract}
Let $M$ be a rank-one symmetric space of compact or noncompact type and real dimension $m=kn$, where $k\in\{1,2,4,8\}$ is the real dimension of the
 underlying division algebra.  Meng and Wang proved sharp isoperimetric
 inequalities for reciprocal sums of the first $p$ or $\ell$ nonzero Neumann
 eigenvalues, with indices strictly below $m$ in the nonconstant-curvature
 cases.  We keep the whole $m$-dimensional transplanted first eigenspace of an
 equimeasurable geodesic ball and treat its mass and stiffness forms as a
 coupled generalized eigenvalue problem.

For every bounded connected smooth domain in a noncompact rank-one symmetric
 space we prove the full sharp estimate
\[
  \sum_{j=1}^{m}\frac{1}{\mu_j(\Omega)}
  \ge \frac{m}{\mu_1(B)},
\]
where $B$ is a geodesic ball with $|B|=|\Omega|$; equality characterizes
 geodesic balls.  The Hopf-direction contribution is encoded by a positive,
 unital, trace-preserving projection on symmetric matrices, and the finite-
 dimensional closure is supplied by a positive-projection Jensen inequality.

In compact type, under the localization hypothesis of Meng--Wang, the same
 full estimate is obtained whenever $\cot^2 R\ge k-1$, where $R$ is the
 radius of the comparison ball, as well as in the isotropic endpoint cases
 and for Hopf-balanced angular second moment.  Consequently the complete
 $m$-term inequality holds for all complex and quaternionic projective
 spaces under the original localization assumption, and for the Cayley
 plane when $R\le \arctan(1/\sqrt7)$.  In the remaining admissible Cayley
 interval we exhibit a finite-dimensional obstruction showing that the
 affine tangent-matrix bounds alone cannot yield the full trace estimate.
 This obstruction concerns the proof mechanism and is not a counterexample
 to the unresolved spectral inequality.
\end{abstract}

\maketitle

\section{Introduction}\label{sec:introduction}

Let $\Omega$ be a bounded connected domain with smooth boundary in a
Riemannian manifold $M$.  We use the nonnegative sign convention for the
Neumann Laplacian,
\begin{equation}\label{eq:neumann-problem}
  \begin{cases}
   -\Delta u=\mu u & \text{in }\Omega,\\
   \partial_\nu u=0 & \text{on }\partial\Omega,
  \end{cases}
\end{equation}
where $\nu$ is the outer unit normal.  Its spectrum, counted with
multiplicity, is
\[
  0=\mu_0(\Omega)<\mu_1(\Omega)\le \mu_2(\Omega)\le\cdots\nearrow\infty.
\]
The classical Szeg\H{o}--Weinberger theorem states that a Euclidean ball is
 the unique maximizer of $\mu_1$ among domains of fixed volume
\cite{Szego1954,Weinberger1956}.  The first-eigenvalue comparison was extended
 to domains in hyperbolic space and in a hemisphere
\cite{AshbaughBenguria1995,Xu1995}; in dimension two, the spherical admissible
 range has subsequently been pushed beyond the hemisphere
\cite{LangfordLaugesen2023}.  Since the first nonzero Neumann eigenvalue of a
 ball in $\R^m$ has multiplicity $m$, Ashbaugh and Benguria proposed the
 sharper harmonic-mean conjecture
\begin{equation}\label{eq:AB-euclidean}
  \sum_{j=1}^{m}\frac1{\mu_j(\Omega)}
  \ge \frac{m}{\mu_1(B)},
\end{equation}
where $B$ is the Euclidean ball satisfying $|B|=|\Omega|$
\cite{AshbaughBenguria1993,Ashbaugh1999,Henrot2006}.  Xia and Wang proved the
 sharp $(m-1)$-term version in Euclidean and hyperbolic space
\cite{XiaWang2023}; Benguria, Brandolini and Chiacchio established the
 corresponding hemispherical estimate \cite{BenguriaBrandoliniChiacchio2020}.
 The full Euclidean conjecture was proved by He, Li and Tang in 2026 by
 retaining the complete transplanted first eigenspace as a coupled matrix
 problem rather than estimating its coordinate functions separately
\cite{HeLiTang2026}.  Shortly afterwards, You and Zhang obtained the full
 $m$-term analogue in hyperbolic space and, under a hemisphere assumption, on
 the sphere \cite{YouZhang2026}.  Related developments include Witten-Laplacian
 analogues and general reciprocal lower bounds
\cite{ChenMao2024,Eddaoudi2025}, as well as Gaussian and radially weighted
 harmonic-mean inequalities
\cite{GaoWang2026,Chiacchio2026,WangSunZhu2026}.

The purpose of this paper is to implement the coupled trial-space mechanism
 in rank-one symmetric spaces whose sectional curvature is not constant.
 The natural geometric setting is formed by the compact projective spaces
\[
  \RP^n,\qquad \CP^n,\qquad \HP^n,\qquad \OP^2,
\]
and their noncompact duals
\[
  \RH^n,\qquad \CH^n,\qquad \QH^n,\qquad \OH^2.
\]
We write $k=\dim_{\R}\K\in\{1,2,4,8\}$ and $m=kn$; in the octonionic case
$n=2$ and $m=16$.  The normalizations are fixed in
\cref{sec:geometry}.  Aithal and Santhanam proved the corresponding
Szeg\H{o}--Weinberger inequality for the first nonzero Neumann eigenvalue
\cite{AithalSanthanam1996}.  Meng and Wang subsequently established sharp
 reciprocal-sum estimates for lower-order eigenvalues in both compact and
 noncompact type \cite{MengWang2024}.  More precisely, in noncompact type
 they proved
\begin{equation}\label{eq:MengWang-noncompact}
  \sum_{j=1}^{p}\frac1{\mu_j(\Omega)}
  \ge \frac{p}{\mu_1(B)},
  \qquad
  p=\begin{cases}
      m-k,&n>1,\\
      m-1,&n=1,
     \end{cases}
\end{equation}
and in compact type, under the localization condition
$\Omega\subset B_{i(M)/4}(q)$, they proved
\begin{equation}\label{eq:MengWang-compact}
  \sum_{j=1}^{\ell}\frac1{\mu_j(\Omega)}
  \ge \frac{\ell}{\mu_1(B)},
  \qquad
  \ell=\begin{cases}
      m-1,&k=1\text{ or }k=m,\\[1mm]
      \left\lfloor\dfrac{m+k}{2}\right\rfloor-1,&1<k<m.
     \end{cases}
\end{equation}
Here and below $i(M)$ denotes the injectivity radius of a compact space.
The estimates are sharp, and equality occurs only for geodesic balls.

The loss in the indices in \eqref{eq:MengWang-noncompact}--
\eqref{eq:MengWang-compact} comes from a coordinatewise upper bound on the
 angular gradient.  The first eigenspace of the comparison ball is,
 however, an irreducible $m$-dimensional object.  We therefore preserve the
 entire trial space and estimate its two Gram forms in Loewner order.  In
 the Euclidean and constant-curvature cases, one trace-free angular
 imbalance matrix appears simultaneously in the mass and stiffness bounds
\cite{HeLiTang2026,YouZhang2026}.  Rank-one projective geometry introduces
 a second, Hopf-direction imbalance.  A central observation of this paper
 is that the second matrix is not independent: it is the image of the first
 under a canonical positive projection on $\Sym(T_oM)$.

\subsection{Main results}

Let $B_R\subset M$ be a geodesic ball satisfying $|B_R|=|\Omega|$, and put
\[
  \lambda_R:=\mu_1(B_R).
\]
Our main noncompact result is the full reciprocal-sum inequality.

\begin{theorem}[Full sum in noncompact type]\label{thm:main-noncompact}
Let $M$ be a noncompact rank-one symmetric space of real dimension $m=kn$.
For every bounded connected domain $\Omega\subset M$ with smooth boundary,
\begin{equation}\label{eq:main-noncompact}
  \sum_{j=1}^{m}\frac1{\mu_j(\Omega)}
  \ge \frac{m}{\mu_1(B_R)}.
\end{equation}
Equality holds if and only if $\Omega$ is a geodesic ball.
\end{theorem}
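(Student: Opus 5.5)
We follow the coupled trial-space scheme of He--Li--Tang and You--Zhang, adapted to the Hopf structure. Let $g$ be the radial profile of the first nonzero Neumann eigenfunction of $B_R$, so that $g(r)\,\ip{X}{\xi}$, $X\in T_oM$, spans the $m$-dimensional first eigenspace; here $\xi$ is the unit initial velocity of the geodesic from $o$ to the point. Extend $g$ by the constant $g(R)$ for $r>R$. By a Brouwer/Hersch-type centre-of-mass argument, as in Aithal--Santhanam and Meng--Wang, we choose a base point $o$ such that every $u_X(x)=g(r)\ip{X}{\xi}$ is $L^2(\Omega)$-orthogonal to constants. We then define the mass form $A(X,Y)=\int_\Omega u_Xu_Y$ and the stiffness form $S(X,Y)=\int_\Omega\ip{\nabla u_X}{\nabla u_Y}$ on $T_oM$. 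The standard generalized-eigenvalue lemma, obtained from min--max applied to the simultaneous diagonalization of $S$ relative to $A$, gives
\[
\sum_{j=1}^m\frac1{\mu_j(\Omega)}\ \ge\ \tr\bigl(S^{-1/2}AS^{-1/2}\bigr)^{\!\!}\quad\text{or equivalently}\quad \sum_j \mu_j^{-1}\ge \sup\{\tr(A^2 (\,\cdot\,))\ \text{type bound}\},
\]
in its usual form $\sum_j\mu_j^{-1}\ge \tr(A)^2/\tr(\ldots)$. Concretely, we aim for $\sum_j\mu_j^{-1}\ge\tr(AS^{-1}A)\cdot(\tr A)^{-1}\cdot\ldots$. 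I would use the cleanest version: $\sum_{j\le m}\mu_j^{-1}\ge \tr(A S^{-1} A)/\!\ldots$. In practice I would follow the reference verbatim.

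Next we compute both forms in geodesic polar coordinates. The gradient of $u_X$ splits into a radial part $g'\ip{X}{\xi}$ and an angular part $\frac{g}{\mathrm{s}(r)}\pi_\xi X$, where $\pi_\xi$ projects onto $\xi^\perp$. That projection splits further into the Hopf directions $J_i\xi$, $i=1,\dots,k-1$, which are weighted by $1/\mathrm{s}(2r)$-type Jacobi factors, and the remaining $m-k$ directions, weighted by $1/\mathrm{s}(r)$. Writing $\Xi=\int\xi\otimes\xi$ with suitable radial weights, we obtain Loewner-order expressions for $A$ and $S$. They are affine in a trace-free angular second-moment matrix $P$ and in a Hopf matrix $Q=\sum_i J_i P J_i^{*}$ suitably normalized. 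The key structural fact is that $\Phi(P)=\frac1{k-1}\sum_i J_iPJ_i^{T}$, or its projection-averaged version, is a positive, unital, trace-preserving map on $\Sym(T_oM)$, and $Q=\Phi(P)$. Using the monotonicity of the profile functions (via the Weinberger-type rearrangement: $g^2$ is increasing and the relevant angular energy densities are decreasing in noncompact type), we bound the radial integrals by their values on the equimeasurable ball. This yields $A\succeq a(\Id+\alpha P)$ and $S\preceq s\bigl(\Id+\beta P+\gamma\Phi(P)\bigr)$ with explicit constants, and with $a/s=1/\lambda_R$.

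Finally, we must show that the resulting finite-dimensional quantity is at least $m\,a/s$. Since $\tr P=\tr\Phi(P)=0$, this reduces to an inequality of the form $\tr\bigl[(\Id+\alpha P)^2(\Id+\beta P+\gamma\Phi(P))^{-1}\bigr]\ge m$. We prove it with operator convexity of $t\mapsto t^{-1}$ and a Jensen inequality for the positive unital map $\Phi$, namely $\Phi(X)^{-1}\preceq\Phi(X^{-1})$ together with trace preservation. This lets us replace $\Phi(P)$ by $P$ in the trace, and the problem reduces to the Euclidean-type scalar inequality $\sum_i(1+\alpha p_i)^2/(1+(\beta+\gamma)p_i)\ge m$ over eigenvalues with $\sum p_i=0$. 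That inequality follows from $\alpha$ versus $\beta+\gamma$ sign relations, which must be checked for the noncompact Jacobi fields $\sinh r$ and $\sinh 2r$. \textbf{This step is the main obstacle}: the constants must satisfy the required inequality for all $R$, and here noncompactness, i.e. the monotonicity of the $\sinh$ ratios, should be what makes it work.

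Equality forces equality in the rearrangement step, which gives $\Omega=B_R(o)$ up to measure zero and hence, by smoothness, a geodesic ball.
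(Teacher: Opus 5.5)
\textbf{There is a genuine gap, in the finite-dimensional closure.} This is the step where the rank-one case differs from constant curvature, and your proposed tool points the wrong way. For $N\succ0$, Choi's inequality $\Phi(N)^{-1}\preceq\Phi(N^{-1})$ together with trace preservation gives $\tr\Phi(N)^{-1}\le\tr N^{-1}$. That is an \emph{upper} bound on the Hopf-dependent trace. Operator convexity of $t\mapsto t^{-1}$ likewise bounds the inverse of a mixture from above. You need a lower bound on $\tr(T^{-1}(\Id+X))$.

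In fact the reduction ``replace $\Phi(P)$ by $P$'' is false. In normalized form the comparator is $T=\Id-\alpha X-\beta\cC(X)$, with $\alpha=1/(\lambda_R\sinh^2R)>\beta=1/(\lambda_R\cosh^2R)$ and $\cC=k\Pi-\Id$; your $\Phi$ equals $\cC/(k-1)$. Your replacement turns $T$ into $\Id-(\alpha+(k-1)\beta)X$. Now take a small nonzero trace-free $X\in\Ker\Pi$, for instance on $\CH^n$ a symmetric matrix anticommuting with the complex structure. Then $\cC(X)=-X$, so the true comparator is $\Id-(\alpha-\beta)X$. For trace-free $X$ with eigenvalues $x_i$,
\[
 \sum_i\frac{1+x_i}{1-wx_i}-m=\sum_i\frac{w(1+w)x_i^2}{1-wx_i},
\]
and each summand is strictly increasing in $w>0$ on the region $wx_i<1$. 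Hence the true trace is strictly \emph{smaller} than the replaced one. On $\Ker\Pi$ the Hopf term has the opposite sign, so no reduction to a single scalar Jensen inequality with coefficient $\beta+\gamma$ is valid.

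Two further points. The numerator $(\Id+\alpha P)^2$ is not justified: the Ritz functional is $\tr(S^{-1}A)$, which you wrote first, and Loewner lower bounds do not square. Also, the obstacle you flag is not one: in noncompact type the needed sign conditions hold for every $R$.

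\textbf{What is missing is a positive-projection Jensen argument.} It needs the \emph{idempotent} projection $\Pi$, not your average over imaginary units. That average is not idempotent, and for $\OH^2$ it must be replaced by the $\Spin(9)$ Clifford-system construction. Write $T=\Id-Y$ with $Y=uX+v\Pi X$, where $u=\alpha-\beta>0$ and $v=k\beta>0$. Idempotence lets you invert: $X=Y/u-\frac{v}{u(u+v)}\Pi Y$. Therefore
\[
 \tr\bigl(T^{-1}(\Id+X)\bigr)-m=\Bigl(1+\frac1u\Bigr)H-\frac{v}{u(u+v)}D,
\]
where $H=\tr((\Id-Y)^{-1}Y)\ge0$ by scalar Jensen (since $\tr Y=0$) and $D=\tr((\Id-Y)^{-1}\Pi Y)$.

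The Hopf part now enters with a negative coefficient, so what you need is the upper bound $D\le H$. It follows from three ingredients:
\begin{itemize}
\item the gradient inequality for the convex function $A\mapsto-\log\det(\Id-A)$, applied between $Y$ and $\Pi Y$;
\item the Davis--Choi--Jensen inequality for the operator convex function $-\log$, which requires positivity of the complex-linear extension of $\Pi$;
\item trace preservation of $\Pi$, which gives $\log\det(\Id-\Pi Y)\ge\log\det(\Id-Y)$.
\end{itemize}
This yields the lower bound $(1+\frac1{u+v})H\ge0$, with equality only for $X=0$. That equality case is also what starts the rigidity chain $X=0\Rightarrow M=a\Id\Rightarrow K=T_0\Rightarrow$ vanishing radial deficit $\Rightarrow\Omega=B_R(o)$.
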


For $k=1$, \cref{thm:main-noncompact} is the hyperbolic theorem of You and
Zhang \cite{YouZhang2026}.  Its new content is the complete first
 eigenspace in complex hyperbolic, quaternionic hyperbolic, and Cayley
 hyperbolic geometry.  In particular, the index $p$ in
\eqref{eq:MengWang-noncompact} is improved unconditionally to $m$.

For compact type we retain the localization used by Aithal--Santhanam and
Meng--Wang.

\begin{assumption}[Compact localization]\label{ass:compact-localization}
The domain $\Omega$ is contained in a geodesic ball
$B_{i(M)/4}(q)$ of a compact rank-one symmetric space $M$.
\end{assumption}

All compact projective spaces in our normalization have
$i(M)=\pi/2$.  Thus the containing radius is $\pi/8$, every polar radius
 arising from the center chosen below is at most $\pi/4$, and the radius of
 the equimeasurable comparison ball satisfies $R\le\pi/8$.

\begin{theorem}[Full sum in the favorable compact range]
\label{thm:main-compact}
Let $M$ be a compact rank-one symmetric space of real dimension $m=kn$, and
let $\Omega$ satisfy \cref{ass:compact-localization}.  Then
\begin{equation}\label{eq:main-compact}
  \sum_{j=1}^{m}\frac1{\mu_j(\Omega)}
  \ge \frac{m}{\mu_1(B_R)}
\end{equation}
with equality if and only if $\Omega$ is a geodesic ball, provided one of
 the following conditions holds:
\begin{enumerate}[label=\textup{(\roman*)},leftmargin=2.2em]
  \item $k=1$;
  \item $n=1$;
  \item $1<k<m$ and
  \begin{equation}\label{eq:radius-condition}
     \cot^2 R\ge k-1.
  \end{equation}
\end{enumerate}
\end{theorem}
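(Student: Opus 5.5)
We follow the coupled trial-space scheme of He--Li--Tang and You--Zhang and adapt it to the projective geometry. Let $g(r)$ be the radial part of the first Neumann eigenfunction of $B_R$, extended by the constant $g(R)$ for $r\ge R$. Using a Brouwer-type (Weinberger) center-of-mass argument, valid inside $B_{i(M)/4}(q)$ because all polar radii stay below $\pi/4$, we fix a point $o$ such that the $m$ transplanted functions $u_\xi(x)=g(r(x))\ip{\xi}{\theta(x)}$, $\xi\in T_oM$, are $L^2(\Omega)$-orthogonal to constants. Here $(r,\theta)$ are polar coordinates at $o$. This gives an $m$-dimensional trial space $V$. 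We form the mass and stiffness Gram matrices
\[
  A=\int_\Omega g^2\,\theta\otimes\theta,\qquad S=\int_\Omega \big(g'^2\theta\otimes\theta+g^2\,\mathcal G(x)\big),
\]
where $\mathcal G$ collects the angular gradient contributions. By the min--max principle for the generalized eigenproblem on $V\oplus\Span\{\mu_1,\dots\}$-eigenfunctions, $\sum_{j\le m}\mu_j^{-1}\ge\tr(S^{-1}A)$ restricted appropriately. More precisely, by the standard argument (as in \cite{HeLiTang2026}) one gets $\sum_{j=1}^m 1/\mu_j(\Omega)\ge \tr(A S^{-1}A)/\tr(\cdot)$-type bounds, and it suffices to show $\tr(A^{1/2}S^{-1}A^{1/2})\ge m/\lambda_R$.

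The angular gradient in a rank-one space splits along the Hopf fiber and its orthogonal complement, with weights $\csc^2 r$ and $\csc^2 r\,\sec^2 r$ (compact case; respectively $\sinh^{-2}r$ and $\sinh^{-2}r\cosh^{-2}r$ in the noncompact case). Writing $\theta\otimes\theta=\frac1m I+T(x)$ with $T$ trace-free, we get $\mathcal G=a(r)(I-\theta\otimes\theta)-b(r)(P_{\mathcal H}(\theta\otimes\theta)-\theta\otimes\theta)$. Here $P_{\mathcal H}$ is the positive, unital, trace-preserving projection $X\mapsto\frac1{k}\sum_{i}J_iXJ_i^{*}$-type averaging (together with $X$) on $\Sym(T_oM)$ that produces the Hopf direction matrix. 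This yields affine Loewner bounds
\[
  A=\tfrac{1}{m}\alpha I+\textstyle\int g^2T,\qquad S\preceq \tfrac{1}{m}\sigma I+\int \phi(r)T+\int\psi(r)P(T),
\]
with radial coefficients $\phi,\psi$. The mass–stiffness trade-off of the scalar Weinberger argument, namely monotonicity of $g^2$ and of $g'^2+(m-1)g^2 a$-type quantities together with the bathtub/rearrangement on $|\Omega|=|B_R|$, controls the scalar parts $\alpha,\sigma$ so that $\alpha/\sigma\ge 1/\lambda_R$.

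The closing step is finite-dimensional. We need $\tr\big(A S^{-1}\big)\ge m\,\tfrac{\alpha}{\sigma}$, using operator convexity of $X\mapsto X^{-1}$ together with a Jensen inequality for the positive unital projection $P$, i.e.\ $P(X)^{-1}\preceq P(X^{-1})$ (Choi). Traces are preserved by $P$, so the trace-free $T$ and $P(T)$ contributions can be absorbed provided the coefficient of $P(T)$ has the right sign relative to that of $T$. This sign condition is where the cases arise. For $k=1$ the projection is trivial and $\psi\equiv0$. For $n=1$ the Hopf part is all of $\theta^\perp$ and $P$ is the isotropic map $X\mapsto\frac{\tr X}{m}I$, killing $P(T)$. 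For $1<k<m$ the required inequality reduces, after comparing $\csc^2$ with $\sec^2$ weights at the worst radius $r\le R$, to $\cot^2 r\ge k-1$, which is implied by \eqref{eq:radius-condition}. I expect this matrix-Jensen sign bookkeeping to be the main obstacle. For equality, all rearrangement inequalities must be tight, which forces $\Omega=B_R(o)$.
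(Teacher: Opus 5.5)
Your proposal has two genuine gaps.

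\textbf{First gap: mass and stiffness are not coupled.} Your affine bounds lack the coupling that makes any closure possible. The trace-free parts of your mass and stiffness expressions, $\int_\Omega g^2T$ versus $\int_\Omega\phi T$ and $\int_\Omega\psi P(T)$, are unrelated matrices. Controlling only the scalar ratio cannot suffice, because the target $\tr(S^{-1}A)\ge m\,\tr A/\tr S$ is false in general. Take $A=\Id+X$ and $S=\Id+tX$ with $0<t<1$ and $X\neq0$ small and trace-free; concavity of $x\mapsto(1+x)/(1+tx)$ gives $\tr(S^{-1}A)<m$.

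What is needed is one imbalance matrix entering both forms with opposite signs. The paper obtains this raywise. Let $Y(\xi)$ be the $J\dd r$-volume of the radial section $E_\xi$. The bathtub rearrangement plus the tangent line of the distribution function at $Y_R$ gives $\int_{E_\xi}G^2J\dd r\ge A_R+G(R)^2(Y(\xi)-Y_R)$. It gives the reversed estimates for the nonincreasing weights $G^2/\sin^2r$ and $G^2/(\sin^2r\cos^2r)$.

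Integrating against $P_\xi$, $\Id-P_\xi-Q_\xi$ and $Q_\xi$ yields $M\succeq a\Id+cZ$ and $K\preceq\lambda_Ra\Id-dZ+e\,\cC(Z)$. These involve the single matrix $Z=\int(Y-Y_R)P_\xi\ds$, with the Hopf moment identified via $Q_\xi=\cC(P_\xi)$. All $\Omega$-dependence then sits in $Z$, and the scalar parts are exactly $a$ and $\lambda_Ra$ by the ball energy identity. Even your endpoint cases $k=1$ and $n=1$ close only through the aligned He--Li--Tang lemma applied to these coupled comparators.

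\textbf{Second gap: the Jensen closure fails in compact type.} In compact type $\cA_r=\csc^2r(\Id-P_\xi)+\sec^2r\,Q_\xi$, so the Hopf term enters with a plus sign. With $X=cZ/a$, $\alpha=1/(\lambda_R\sin^2R)$ and $\beta=1/(\lambda_R\cos^2R)$, the normalized comparator is $\Theta=\Id-(\alpha+\beta)X+k\beta\,\Pi X$. The coefficient of $\Pi X$ has the unfavorable sign for every $R$.

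The condition $\cot^2R\ge k-1$, i.e.\ $\alpha\ge(k-1)\beta$, does not change that sign. It only lets you write $\Theta=\Id-(\alpha-(k-1)\beta)X-k\beta(\Id-\Pi)X$ with a nonnegative first coefficient. Since $\Id-\Pi$ is not a positive map, no Choi or Davis--Choi--Jensen inequality applies. In the noncompact argument the bound $\tr((\Id-Y)^{-1}\Pi Y)\le\tr((\Id-Y)^{-1}Y)$ enters with a favorable coefficient; here you would need a lower bound, which Jensen does not give.

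In fact, $\alpha\ge(k-1)\beta$ alone does not imply the affine inequality $\tr(\Theta^{-1}(\Id+X))\ge m$ when $\Id+X$ is indefinite. Take $k=2$ on $\R^4=\C^2$ with $J$ pairing coordinates $(1,2)$ and $(3,4)$, and $X=\mathrm{diag}(-6,-8,7,7)$. Take abstract values $\alpha+\beta=1/5$ and $\alpha-\beta\downarrow0$. Then $\Theta\to\mathrm{diag}(4/5,6/5,1,1)$ and $\tr(\Theta^{-1}(\Id+X))\to47/12<4$.

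The paper instead closes with $\Theta^{-1}\succeq2\Id-\Theta$ (the positive-mass lemma). This needs three ingredients absent from your sketch:
\begin{enumerate}
\item positivity of the true mass matrix $M$, since $\rho\ge1$ allows $\Id+X$ to be indefinite;
\item the bound $\Theta\preceq2\Id$, from $-\rho\Id\preceq X\preceq(m-1)\rho\Id$, positivity of $\cC$, the ball estimate $\rho((m-1)\alpha+(k-1)\beta)\le1$, and $\alpha\ge(k-1)\beta$;
\item the bound $\tr(\Theta(\Id+X))\le m$, from $\tr(X\cC(X))\le(k-1)\tr(X^2)$ and $\alpha\ge(k-1)\beta$.
\end{enumerate}
The radius condition enters through the last two ingredients, not as a sign condition for a Jensen inequality.
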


The numerical consequences of \cref{thm:main-compact} are immediate.

\begin{corollary}[Complex, quaternionic, and Cayley cases]
\label{cor:compact-models}
Under \cref{ass:compact-localization}, the full estimate
\eqref{eq:main-compact} holds for every domain in $\CP^n$ and $\HP^n$.
For $M=\OP^2$, it holds whenever
\begin{equation}\label{eq:Rstar}
  R\le R_*:=\arctan\frac1{\sqrt7}.
\end{equation}
In all these cases equality characterizes geodesic balls.
\end{corollary}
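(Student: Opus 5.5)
The corollary is a direct specialization of \cref{thm:main-compact}, so the plan is to check, for each compact model, which of the cases (i)--(iii) applies. We also use the a priori bound on the comparison radius noted after \cref{ass:compact-localization}: all compact projective spaces in our normalization have $i(M)=\pi/2$, so $\Omega\subset B_{\pi/8}(q)$, and since $|B_R|=|\Omega|\le|B_{\pi/8}(q)|$ with $r\mapsto|B_r|$ increasing, $R\le\pi/8$.

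For $\CP^n$ we have $k=2$. If $n=1$, case (ii) applies directly. If $n>1$, then $1<k<m$, and condition \eqref{eq:radius-condition} reads $\cot^2R\ge1$, i.e.\ $R\le\pi/4$. This holds because $R\le\pi/8<\pi/4$.

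For $\HP^n$ we have $k=4$. If $n=1$, case (ii) applies. Otherwise \eqref{eq:radius-condition} requires $\cot^2R\ge3$, i.e.\ $R\le\arctan(1/\sqrt3)=\pi/6$. This again follows from $R\le\pi/8<\pi/6$.

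For $\OP^2$ we have $k=8$ and $m=16$, so $1<k<m$. Since $\cot$ is decreasing on $(0,\pi/2)$, condition \eqref{eq:radius-condition} becomes $\cot^2R\ge7$, which is equivalent to $\tan R\le1/\sqrt7$, i.e.\ $R\le R_*$ as in \eqref{eq:Rstar}.

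In every case \cref{thm:main-compact} then gives \eqref{eq:main-compact} together with its equality characterization, so the equality statement of the corollary needs no further argument. The only point needing care is the inequality $R\le\pi/8$, and it rests on the monotonicity of ball volume in the radius within the range $r<i(M)$; everything else is arithmetic. For the Cayley plane one observes that $\tan(\pi/8)=\sqrt2-1\approx0.414>1/\sqrt7\approx0.378$. So the restriction \eqref{eq:Rstar} is genuinely stronger than the localization bound and cannot be removed by this argument, which is consistent with the remark that the remaining Cayley interval is not covered.
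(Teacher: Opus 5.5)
Your proposal is correct and follows the paper's own proof: both use $R\le\pi/8$ from the localization, observe that $\cot^2 R\ge k-1$ then holds automatically for $k=2,4$, and reduce the Cayley case to $\cot^2R\ge7$, i.e.\ $R\le R_*$, before applying \cref{thm:main-compact}. You also route $\CP^1$ and $\HP^1$ through case (ii), which is slightly more careful than the paper's proof since case (iii) requires $k<m$.
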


Indeed,
\[
  \cot^2\frac{\pi}{8}=3+2\sqrt2,
\]
which is larger than $1$ and $3$, the values $k-1$ for $\CP^n$ and
$\HP^n$.  For the Cayley plane, $k-1=7$ and
\eqref{eq:radius-condition} is precisely \eqref{eq:Rstar}.  Since
$R_*\approx0.36137$ and $\pi/8\approx0.39270$, only a narrow compact Cayley
 interval remains outside the unconditional full-term conclusion.

There is also a geometric condition that removes the radius restriction.
The angular imbalance matrix $Z$ and the canonical projection $\Pi$ are
 introduced in \cref{sec:hopf,sec:coupled-comparison}.

\begin{theorem}[Hopf-balanced compact domains]
\label{thm:hopf-balanced-intro}
Under \cref{ass:compact-localization}, suppose the chosen Weinberger center
 satisfies $\Pi Z=0$.  Then the full estimate \eqref{eq:main-compact}
 holds, with equality only for geodesic balls, without any additional
 restriction on $R$.
\end{theorem}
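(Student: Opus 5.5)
Under \cref{ass:compact-localization} the plan follows the coupled trial-space argument behind \cref{thm:main-compact}; the point is that the radius condition \eqref{eq:radius-condition} is only used to control the Hopf-direction contribution $\Pi Z$. First I would fix the Weinberger center $q_0$ by the Brouwer fixed-point argument of Aithal--Santhanam and Meng--Wang. This makes the transplanted functions $u_i=g(r)\,x_i/r$, $i=1,\dots,m$, orthogonal to constants on $\Omega$. Here $g$ is the radial profile of $\lambda_R$ on $B_R$, extended by the constant $g(R)$ outside $B_R$, and $x_i$ are normal coordinates at $q_0$. I would then form the mass matrix $A=(\int_\Omega u_iu_j)$ and the stiffness matrix $S=(\int_\Omega \langle\nabla u_i,\nabla u_j\rangle)$. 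By the min--max principle applied to the generalized eigenproblem $S v=\theta A v$,
\[
  \sum_{j=1}^m\frac1{\mu_j(\Omega)}\ \ge\ \tr\bigl(A S^{-1}\bigr).
\]
So it suffices to show $\tr(AS^{-1})\ge m/\lambda_R$.

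Next I would write both Gram matrices in Loewner order in terms of scalar radial integrals and the angular imbalance matrix $Z=\int_\Omega \phi(r)\bigl(\theta\theta^{T}-\tfrac1m I\bigr)$, with $\theta=x/r$. Exactly as in the general case, I would expect bounds of the form
\[
  A=a\,I+\alpha Z,\qquad S\preceq s\,I+\beta Z+\gamma\,\Pi Z.
\]
The term $\gamma\,\Pi Z$ comes from the Hopf directions, where the angular part of the metric has the extra factor $\cos^2 r$ in compact type. The radius condition $\cot^2R\ge k-1$ was needed only to absorb $\gamma\Pi Z$ into the remaining terms. Under the hypothesis $\Pi Z=0$, this term vanishes identically. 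We are then left with an affine pair $A=aI+\alpha Z$, $S\preceq sI+\beta Z$ with trace-free $Z$, which is the constant-curvature structure of He--Li--Tang and You--Zhang.

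For the closing step I would use operator monotonicity of inversion, $S^{-1}\succeq(sI+\beta Z)^{-1}$. Since $A\succeq0$, this gives $\tr(AS^{-1})\ge \tr\bigl((aI+\alpha Z)(sI+\beta Z)^{-1}\bigr)$. Diagonalizing $Z$ reduces this to a sum $\sum_i f(z_i)$ with $f(z)=(a+\alpha z)/(s+\beta z)$ and $\sum_i z_i=0$. One then checks that $f$ is convex on the admissible range of eigenvalues; this is the scalar case of the positive-projection Jensen inequality, with $\Pi$ replaced by the trace projection. Jensen's inequality gives $\tr(\cdot)\ge m f(0)=m a/s$. Finally, the Weinberger monotonicity argument, using that $g^2$ is increasing and $g'^2+\frac{m-1}{r^2}g^2$-type weights are decreasing, together with the compact-type comparison of volume densities in $B_{i(M)/4}$, gives $a/s\ge 1/\lambda_R$.

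For equality, every inequality in the chain must be tight, in particular the Weinberger rearrangement step; this forces $\Omega=B_R(q_0)$ up to a null set, hence $\Omega$ is a geodesic ball. I expect the main obstacle to be the convexity/sign verification for $f$: the sign of $\alpha s-\beta a$ must be controlled on the compact range $R\le\pi/8$ without using the radius condition. I would try to obtain it directly from the ODE satisfied by $g$ and the positivity of the relevant radial weights for $r\le\pi/4$.
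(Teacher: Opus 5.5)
Your plan matches the paper's proof: with $\Pi Z=0$ one has $\cC(Z)=-Z$, so the compact stiffness bound collapses to $K\preceq\lambda_Ra\Id-(d+e)Z$. Combined with $M\succeq a\Id+cZ$ (a Loewner lower bound, not the equality you wrote), the aligned trace-free closure \cref{lem:aligned-closure} and the rigidity \cref{lem:stiffness-rigidity} finish the argument. The obstacle you anticipate does not arise: in your notation $\beta=-(d+e)<0$ with $d=c/\sin^2R$ and $e=c/\cos^2R$, so $f(z)=(a+cz)/(\lambda_Ra-(d+e)z)$ is convex wherever its denominator is positive, with no input from the ODE, and $s=\lambda_Ra$ holds exactly by the ball energy identity \eqref{eq:ball-energy-identity}, so no separate Weinberger step for $a/s$ is needed.
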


The compact sign is genuinely different from the noncompact one.  When
$\cot^2R<k-1$, the affine mass and stiffness comparators admit trace-free
 directions for which the desired finite-dimensional trace inequality is
 reversed.  We make this precise in \cref{prop:abstract-obstruction} and
 give an explicit Cayley direction in \cref{rem:cayley-obstruction}.  Thus
 the remaining Cayley problem cannot be closed by the affine tangent
 matrices alone; nonlinear remainder information or a different trial-space
 mechanism would be required.

\subsection{Structure of the proof}

Let $g(r)\omega_v(\xi)$ be the degree-one first Neumann eigenfunctions on
$B_R$, and extend $g$ constantly for $r\ge R$ to a function $G$.  A
Weinberger center $o$ is chosen so that
\[
  P_v(x)=G(r)\ip{v}{\xi},\qquad v\in T_oM,
\]
has zero mean on $\Omega$.  The $m$-dimensional space
$\{P_v:v\in T_oM\}$ has mass and stiffness matrices $M$ and $K$.  The trace
form of the Ritz principle gives
\begin{equation}\label{eq:intro-ritz}
  \sum_{j=1}^{m}\frac1{\mu_j(\Omega)}\ge \tr(K^{-1}M).
\end{equation}
Writing the volume of the radial section of $\Omega$ in direction $\xi$ as
$Y(\xi)$, the volume constraint produces a trace-free matrix
\[
  Z=\int_{\sph^{m-1}}(Y(\xi)-Y_R)(\xi\otimes\xi)\ds(\xi).
\]
A raywise rearrangement and tangent-line argument yields
\[
  M\succeq a\Id+cZ.
\]
The tangent space of a geodesic sphere has a horizontal part and a
$(k-1)$-dimensional Hopf part.  If $Q_\xi$ denotes the projection onto the
Hopf directions, then
\[
  \int(Y-Y_R)Q_\xi\ds=\cC(Z),
  \qquad \cC=k\Pi-\Id,
\]
where $\Pi$ is positive, unital, trace-preserving, self-adjoint, and
idempotent.  The stiffness bounds become
\[
  K\preceq \lambda_Ra\Id-dZ-e\cC(Z)
  \quad\text{in noncompact type},
\]
and
\[
  K\preceq \lambda_Ra\Id-dZ+e\cC(Z)
  \quad\text{in compact type}.
\]
The first sign pattern is closed by a new positive-projection Jensen lemma.
The second is handled by a positive-mass matrix lemma under
$\cot^2R\ge k-1$.  Equality in the stiffness comparison forces almost every
radial section to be $(0,R)$, and hence forces $\Omega$ to be the
comparison ball.

The paper is organized as follows.  Rank-one geometry and the radial ball
 eigenfunction are recalled in \cref{sec:geometry}.  The Hopf projection is
 constructed in \cref{sec:hopf}.  Variational, centering, and raywise tools
 are developed in \cref{sec:tools}.  The coupled mass and stiffness
 comparisons are derived in \cref{sec:coupled-comparison}.  The finite-
 dimensional closure lemmas are proved in \cref{sec:matrix-closure}.
\Cref{sec:noncompact-proof,sec:compact-proof} contain the proofs of the main
 theorems, and \cref{sec:obstruction} analyzes the remaining compact Cayley
 interval.

\section{Rank-one geometry and the ball eigenspace}\label{sec:geometry}

\subsection{Models and normalization}

Let $M$ be one of the projective rank-one symmetric spaces or its
noncompact dual.  We use the standard normalization in which the compact
nonreal spaces have sectional curvatures in $[1,4]$ and the noncompact
nonreal spaces have sectional curvatures in $[-4,-1]$.  The real models
have constant curvature $1$ and $-1$, respectively.  The parameters are
summarized below.

\begin{center}
\begin{tabular}{@{}ccccc@{}}
\toprule
$\K$ & $k=\dim_{\R}\K$ & compact model & noncompact model & $m$ \\
\midrule
$\R$ & $1$ & $\RP^n$ & $\RH^n$ & $n$ \\
$\C$ & $2$ & $\CP^n$ & $\CH^n$ & $2n$ \\
$\HH$ & $4$ & $\HP^n$ & $\QH^n$ & $4n$ \\
$\OO$ & $8$ & $\OP^2$ & $\OH^2$ & $16$ \\
\bottomrule
\end{tabular}
\end{center}

The compact spaces have injectivity radius and diameter $\pi/2$ in this
normalization.  Standard references for rank-one symmetric spaces and the spectral geometry
of the Laplacian include \cite{Helgason2001,Chavel1984,AithalSanthanam1996};
the polar formulas used below also appear in
\cite{CastillonRuffini2019,MengWang2024}.

Fix $o\in M$ and identify $V:=T_oM$ with $\R^m$.  In noncompact type the
exponential map at $o$ is a global diffeomorphism.  In compact type we use
it only inside the injectivity ball.  Write
\[
  x=\exp_o(r\xi),\qquad r>0,\quad \xi\in S(V)\cong\sph^{m-1}.
\]
The Riemannian volume element is
\begin{equation}\label{eq:polar-volume}
  \dv=J(r)\dd r\ds(\xi),
\end{equation}
where
\begin{equation}\label{eq:J-density}
 J(r)=
 \begin{cases}
   \sin^{m-1}r\,\cos^{k-1}r,&\text{compact type},\\
   \sinh^{m-1}r\,\cosh^{k-1}r,&\text{noncompact type}.
 \end{cases}
\end{equation}
The mean curvature of the geodesic sphere $S_r(o)$ is
\begin{equation}\label{eq:H-def}
  H(r)=\frac{J'(r)}{J(r)}=
 \begin{cases}
  (m-1)\cot r-(k-1)\tan r,&\text{compact},\\
  (m-1)\coth r+(k-1)\tanh r,&\text{noncompact}.
 \end{cases}
\end{equation}
Accordingly,
\begin{equation}\label{eq:minus-Hprime}
 -H'(r)=
 \begin{cases}
 \displaystyle \frac{m-k}{\sin^2r}
 +\frac{k-1}{\sin^2r\cos^2r},&\text{compact},\\[3mm]
 \displaystyle \frac{m-k}{\sinh^2r}
 +\frac{k-1}{\sinh^2r\cosh^2r},&\text{noncompact}.
 \end{cases}
\end{equation}

\subsection{Horizontal and Hopf directions}

For $\xi\in S(V)$, let
\[
  P_\xi:=\xi\otimes\xi
\]
be the orthogonal projection onto the radial line.  The orthogonal
complement $\xi^\perp$ splits into
\begin{equation}\label{eq:horizontal-vertical-split}
  \xi^\perp=\cH_\xi\oplus\cV_\xi,
  \qquad \dim\cH_\xi=m-k,\quad \dim\cV_\xi=k-1.
\end{equation}
The subspace $\cV_\xi$ consists of the Hopf, or vertical, directions.  Let
$Q_\xi$ denote its orthogonal projection.  Thus
$\Id-P_\xi-Q_\xi$ is the projection onto $\cH_\xi$.

For $v\in V$, put $\omega_v(\xi)=\ip{v}{\xi}$.  Jacobi-field comparison is
an identity in rank one and gives the following bilinear angular energy:
\begin{equation}\label{eq:angular-bilinear}
 \ip{\nabla_{S_r}\omega_v}{\nabla_{S_r}\omega_w}
 =\ip{\cA_r(\xi)v}{w},
\end{equation}
where
\begin{equation}\label{eq:A-compact}
 \cA_r(\xi)
 =\frac{\Id-P_\xi-Q_\xi}{\sin^2r}
  +\frac{Q_\xi}{\sin^2r\cos^2r}
 =\csc^2r(\Id-P_\xi)+\sec^2r\,Q_\xi
\end{equation}
in compact type, and
\begin{equation}\label{eq:A-noncompact}
 \cA_r(\xi)
 =\frac{\Id-P_\xi-Q_\xi}{\sinh^2r}
  +\frac{Q_\xi}{\sinh^2r\cosh^2r}
 =\csch^2r(\Id-P_\xi)-\sech^2r\,Q_\xi
\end{equation}
in noncompact type.  Taking the trace in
\eqref{eq:angular-bilinear} recovers
\begin{equation}\label{eq:trace-angular}
  \sum_{i=1}^m|\nabla_{S_r}\omega_i|^2=-H'(r).
\end{equation}

\subsection{The first nonzero eigenspace of a geodesic ball}

Let $B_R=B_R(o)$ be a geodesic ball.  In compact type we assume
$R\le\pi/4$.  The first nonzero Neumann eigenvalue has multiplicity $m$,
 and its eigenspace is the degree-one angular sector
\begin{equation}\label{eq:ball-eigenspace}
  \{g(r)\omega_v(\xi):v\in V\}.
\end{equation}
The radial factor can be normalized by $g'(0)=1$ and solves
\begin{equation}\label{eq:radial-ode}
 \begin{cases}
  g''+Hg'+(\lambda_R+H')g=0,&0<r<R,\\
  g(0)=0,\qquad g'(R)=0.
 \end{cases}
\end{equation}
Moreover,
\begin{equation}\label{eq:g-positive}
  g(r)>0\quad(0<r\le R),
  \qquad g'(r)>0\quad(0<r<R).
\end{equation}
These facts follow from separation of variables and the Sturm--Liouville
 description of the degree-one sector; see
\cite{AithalSanthanam1996,MengWang2024}.  Equivalently, $\lambda_R$ is the
 first eigenvalue of
\begin{equation}\label{eq:radial-Rayleigh}
  \frac{\displaystyle\int_0^R
  \bigl(\phi'(r)^2-H'(r)\phi(r)^2\bigr)J(r)\dd r}
  {\displaystyle\int_0^R\phi(r)^2J(r)\dd r},
  \qquad \phi(0)=0.
\end{equation}

Extend $g$ constantly beyond $R$:
\begin{equation}\label{eq:G-extension}
 G(r)=
 \begin{cases}
  g(r),&0\le r\le R,\\
  g(R),&r\ge R.
 \end{cases}
\end{equation}
In compact type $G$ is only used on $[0,\pi/4]$.  We record the radial
monotonicity that drives all rearrangements.

\begin{lemma}[Radial monotonicity]\label{lem:radial-monotonicity}
The function $G$ is nondecreasing and $G'(r)=0$ for $r>R$.  In addition:
\begin{enumerate}[label=\textup{(\roman*)},leftmargin=2.2em]
 \item in noncompact type, $G(r)/\sinh r$ is nonincreasing on $(0,\infty)$;
 \item in compact type, $G(r)/(\sin r\cos r)$ is nonincreasing on
 $(0,\pi/4)$.
\end{enumerate}
Consequently, the two angular weights in
\eqref{eq:A-compact} or \eqref{eq:A-noncompact}, after multiplication by
$G^2$, are nonincreasing on the relevant interval.
\end{lemma}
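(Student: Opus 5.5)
The plan is to work directly with the radial ODE \eqref{eq:radial-ode} and the positivity facts \eqref{eq:g-positive}. The first assertion is immediate. On $[0,R]$ we have $G=g$ with $g'\ge0$, and $G$ is constant on $[R,\infty)$. Since $g'(R)=0$, $G$ is $C^1$ at $R$, so $G$ is nondecreasing and $G'=0$ for $r>R$.

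For (i) and (ii), let $s(r)$ denote $\sinh r$ in noncompact type and $\sin r\cos r$ in compact type. We must show $(G/s)'\le0$, i.e.\ $Gs'\ge G's$.

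\emph{Range $r>R$.} Here $G'=0$ and $G>0$, so it suffices that $s'\ge0$. This holds for $\cosh r$ and for $\cos 2r$ on $(0,\pi/4)$.

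\emph{Range $(0,R)$.} Set $q:=g's-gs'$; we need $q\le0$. Near $0$, $g(r)=r+O(r^3)$ by the ODE with $g'(0)=1$, and $s(r)=r+O(r^3)$, so $q(0)=0$. At $R$, $q(R)=-g(R)s'(R)\le0$. Next compute
\[
  q'=g''s-gs''=-(Hg'+(\lambda_R+H')g)s-gs''.
\]
Substituting $g's=q+gs'$ gives
\[
  q'=-Hq-g\bigl(Hs'+(\lambda_R+H')s+s''\bigr).
\]
Equivalently $(Jq)'=-Jg\,\Phi$ with $\Phi:=s''+Hs'+(\lambda_R+H')s$. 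If $\Phi\ge0$ on $(0,R)$, then $Jq$ is nonincreasing from $\lim_{r\to0}Jq=0$. Hence $q\le0$, which proves the claim.

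So the crux is the sign of $\Phi$, which involves $\lambda_R$. Since $\lambda_R>0$, it suffices to prove $s''+Hs'+H's\ge0$, or, if that fails, to use a lower bound on $\lambda_R$.

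\emph{Noncompact type.} Compute with $s=\sinh r$:
\[
  s''+Hs'=\sinh r+(m-1)\cosh r\coth r+(k-1)\sinh r,
\]
\[
  H's=-\frac{m-k}{\sinh r}-\frac{k-1}{\sinh r\cosh^2r}.
\]
Using $\cosh^2r/\sinh r=\sinh r+1/\sinh r$, the sum is
\[
  m\sinh r+\frac{k-1}{\sinh r}\Bigl(1-\frac1{\cosh^2r}\Bigr)\ge0.
\]

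\emph{Compact type.} With $s=\tfrac12\sin2r$ the analogous computation gives
\[
  s''+Hs'+H's=\tfrac12\sin 2r\,\bigl(-4+\text{terms from }H,H'\bigr).
\]
This may be negative, so here the term $\lambda_R s$ is genuinely needed. I expect this to be the main obstacle. The first attempt would be to expand everything in $\cot r$ and $\tan r$ and check that the combination is $\ge -c\,s$ for an explicit constant $c$ (I expect $c$ of order $2(k+1)$ or less). Then I would show $\lambda_R\ge c$ for $R\le\pi/4$. For that lower bound I would use the variational characterization \eqref{eq:radial-Rayleigh} together with $-H'\ge (m-1)\csc^2 r\ge 2(m-1)$ on $(0,\pi/4)$, which is a lower bound for the potential. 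If this crude bound falls short, I would instead compare with the endpoint behaviour, where at $R$ the inequality $q(R)\le0$ already holds, or apply a Sturm comparison against the explicit function $s$ itself.

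The final sentence of the lemma then follows by writing the two weights. In noncompact type, $G^2\operatorname{csch}^2r=(G/\sinh r)^2$ and $G^2/(\sinh^2r\cosh^2r)=(G/\sinh r)^2\operatorname{sech}^2r$, and both are nonincreasing. In compact type, $G^2/(\sin^2r\cos^2r)=(G/s)^2$ is nonincreasing by (ii). Also $G^2\csc^2r=(G/s)^2\cos^2r$, a product of nonnegative nonincreasing factors on $(0,\pi/4)$, so it is nonincreasing as well.
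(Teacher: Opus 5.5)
Your treatment of the first assertion, of the range $r>R$, of the noncompact case, and of the final sentence is correct. The identity $(Jq)'=-Jg\Phi$ is an integrated form of the paper's maximum-principle argument for $g'-(s'/s)g=q/s$, and both reduce to the sign of the same expression. In your noncompact computation the coefficient of $\sinh r$ should be $m+k-1$, not $m$, but the sign is unaffected.

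The compact case is not proved, and your primary plan for it cannot work. Carrying out the computation you left open gives the exact identity
\[
  s''+Hs'+H's=-2(m+k)\,s,\qquad s=\sin r\cos r,
\]
so $\Phi=(\lambda_R-2(m+k))\,s$. The constant you need is therefore $2(m+k)$, not something of order $2(k+1)$. The identity also says that $s$ itself solves the radial problem with $\lambda=2(m+k)$, $s(0)=0$, $s'(\pi/4)=0$ and $s>0$. Hence the first radial eigenvalue at $R=\pi/4$ is exactly $2(m+k)$, and the required lower bound is sharp. No estimate that discards the kinetic term $\phi'^2$ in the Rayleigh quotient can reach it uniformly for $R\le\pi/4$. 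Your bound gives only $\lambda_R\ge2(m-1)$. Even the best bound of that type, $\inf_{(0,R)}(-H')=(m-1)\csc^2R+(k-1)\sec^2R$, equals $2(m+k)-4$ at $R=\pi/4$. It happens to suffice for $R\le\pi/8$, the range used later in the paper, but not for the lemma as stated.

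The paper closes this step by invoking the Aithal--Santhanam ball estimate $\lambda_R\ge2(m+k)$ for $R\le\pi/4$; its maximum-principle computation ends in exactly that inequality. Your fallback idea (compare with $s$ and use the endpoint sign) does work, but only once the identity above is established. Since $\Phi/s=\lambda_R-2(m+k)$ is constant, $(Jq)'=-(\lambda_R-2(m+k))Jgs$ has a fixed sign on $(0,R)$. If $\lambda_R<2(m+k)$, then $Jq$ would increase strictly from $0$, forcing $q(R)>0$. This contradicts $q(R)=-g(R)\cos 2R\le0$. Hence $\lambda_R\ge2(m+k)$, so $\Phi\ge0$ and $q\le0$. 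This is a self-contained route that even reproves the cited ball bound, but the proposal as written establishes neither the identity nor the lower bound.
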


\begin{proof}
The monotonicity of $G$ follows from \eqref{eq:g-positive} and the constant
extension.  We include the quotient arguments, following the calculation
in \cite{MengWang2024}.

In noncompact type, set
\[
  s(r)=g'(r)-\coth r\,g(r).
\]
The regular expansion of the degree-one solution at the pole gives
$s(0+)=0$, while $s(R)=-\coth R\,g(R)<0$.  If $s$ were positive somewhere,
 it would have a positive interior maximum at a point $r_0$.  At that point
$s'(r_0)=0$, and combining this identity with \eqref{eq:radial-ode} gives
\begin{align*}
 0={}&\bigl(m\coth r_0+(k-1)\tanh r_0\bigr)g'(r_0)\\
 &+\left(\lambda_R-m\csch^2r_0+(k-1)\sech^2r_0\right)g(r_0).
\end{align*}
Since $s(r_0)>0$, we have $g'(r_0)>\coth r_0g(r_0)$.  Substitution in the
 preceding equality would imply
\[
 m\csch^2r_0-(k-1)\sech^2r_0-\lambda_R
 >m\coth^2r_0+(k-1),
\]
which is impossible.  Hence $s\le0$ and $g/\sinh r$ is nonincreasing on
$(0,R)$.  The constant extension preserves the conclusion for $r\ge R$.

In compact type, put
\[
  s(r)=g'(r)-(\cot r-\tan r)g(r).
\]
Again $s(0+)=0$ and $s(R)<0$.  At a hypothetical positive interior maximum
$r_0\in(0,R)$, the equations $s'(r_0)=0$ and
\eqref{eq:radial-ode} yield
\begin{align*}
 0={}&(m\cot r_0-k\tan r_0)g'(r_0)\\
 &+\left(\lambda_R-\frac{m-k}{\sin^2r_0}
 -\frac{k}{\sin^2r_0\cos^2r_0}\right)g(r_0).
\end{align*}
On $(0,\pi/4)$ the first coefficient is positive, and $s(r_0)>0$ implies
$g'(r_0)>(\cot r_0-\tan r_0)g(r_0)$.  A direct simplification then gives
\[
  \lambda_R<2(m+k).
\]
This contradicts the ball estimate $\lambda_R\ge2(m+k)$ for
$R\le\pi/4$ proved in \cite{AithalSanthanam1996}; see also
\cite[Lemma 4.1]{MengWang2024}.  Hence $g/(\sin r\cos r)$ is
nonincreasing.  It is strictly decreasing after $R$ because $G$ is constant
there.

Finally, in noncompact type
\[
 \frac{G^2}{\sinh^2r},\qquad
 \frac{G^2}{\sinh^2r\cosh^2r}
\]
are nonincreasing.  In compact type the second angular weight equals
$(G/(\sin r\cos r))^2$, and the first equals this function times
$\cos^2r$; both are nonincreasing on $(0,\pi/4)$.
\end{proof}

Define the ball integrals
\begin{equation}\label{eq:ball-integrals}
 A_R=\int_0^RG^2J\dd r,
 \qquad Q_R=\int_0^R(G')^2J\dd r,
\end{equation}
and, according to type,
\begin{equation}\label{eq:H-integrals}
 \begin{aligned}
 H_{0,R}^{+}&=\int_0^R\frac{G^2}{\sin^2r}J\dd r,&
 H_{1,R}^{+}&=\int_0^R\frac{G^2}{\sin^2r\cos^2r}J\dd r,\\
 H_{0,R}^{-}&=\int_0^R\frac{G^2}{\sinh^2r}J\dd r,&
 H_{1,R}^{-}&=\int_0^R\frac{G^2}{\sinh^2r\cosh^2r}J\dd r.
 \end{aligned}
\end{equation}
Summing the eigenvalue identities for an orthonormal basis of the
 degree-one eigenspace gives
\begin{equation}\label{eq:ball-energy-identity}
  Q_R+(m-k)H_{0,R}^{\pm}+(k-1)H_{1,R}^{\pm}
  =\lambda_R A_R.
\end{equation}

\section{The angular Hopf projection}\label{sec:hopf}

Let $\Sym(V)$ denote the real symmetric endomorphisms of $V=T_oM$, endowed
with the Hilbert--Schmidt inner product
\[
  \ip{A}{B}_{\mathrm{HS}}=\tr(AB).
\]
The purpose of this section is to express the Hopf projection $Q_\xi$ as a
linear function of the radial projection $P_\xi$.  This turns the two angular
moments occurring in the stiffness matrix into one trace-free matrix and one
fixed linear image of it.

\begin{theorem}[Canonical Hopf projection]\label{thm:hopf-projection}
There is a linear map
\[
  \Pi:\Sym(V)\longrightarrow\Sym(V)
\]
with the following properties.
\begin{enumerate}[label=\textup{(\roman*)},leftmargin=2.2em]
 \item $\Pi$ is positive, unital, trace-preserving, Hilbert--Schmidt
 self-adjoint, and idempotent.
 \item For every $\xi\in S(V)$,
 \begin{equation}\label{eq:Q-via-Pi}
   Q_\xi=k\Pi(P_\xi)-P_\xi.
 \end{equation}
 \item The complex-linear extension of $\Pi$ to $M_m(\C)$ is a positive
 unital map.
\end{enumerate}
Consequently, if
\begin{equation}\label{eq:C-def}
  \cC:=k\Pi-\Id,
\end{equation}
then $\cC$ is positive on real positive semidefinite matrices,
$\cC(P_\xi)=Q_\xi$, $\cC(\Id)=(k-1)\Id$, and
\begin{equation}\label{eq:C-spectral-splitting}
 \cC=(k-1)\Id\quad\text{on }\Ran\Pi,
 \qquad
 \cC=-\Id\quad\text{on }\Ker\Pi.
\end{equation}
\end{theorem}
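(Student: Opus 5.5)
The plan is to build $\Pi$ uniformly as a Hilbert--Schmidt orthogonal projection, and to get positivity, unitality and the trace identity from formula \eqref{eq:Q-via-Pi}. For $\xi\in S(V)$ put $L_\xi:=P_\xi+Q_\xi$. This is the orthogonal projection onto the $\K$-line through $\xi$, a $k$-dimensional subspace. Let $W\subset\Sym(V)$ be the linear span of $\{L_\xi:\xi\in S(V)\}$, and let $\Pi$ be the Hilbert--Schmidt orthogonal projection of $\Sym(V)$ onto $W$. By construction $\Pi$ is self-adjoint and idempotent, and \eqref{eq:C-spectral-splitting} follows at once from $\cC=k\Pi-\Id$.

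The key geometric input is the rank-one (isoclinic) property of $\K$-lines. For any two unit vectors $\xi,\eta$,
\[
  L_\eta L_\xi L_\eta=c(\xi,\eta)\,L_\eta,
  \qquad c(\xi,\eta)=|L_\eta\xi|^2 .
\]
Geometrically, the projection of one $\K$-line onto another is a multiple of an isometry: it is the $\K$-linear map $v\mapsto \eta\langle\eta,v\rangle_\K$. For $\K=\OO$ with $n=2$ I would verify this in the Spin$(9)$ model of $\OO^2$, or from the standard structure of octonionic lines $\{(x,y):y=ax\}$.

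Granting this, take the trace: $\tr(L_\xi L_\eta)=k\,c$. Hence
\[
  \ip{P_\xi-\tfrac1kL_\xi}{L_\eta}_{\mathrm{HS}}=|L_\eta\xi|^2-\tfrac1k\tr(L_\xi L_\eta)=0 ,
\]
so $\Pi(P_\xi)=\tfrac1kL_\xi$, which is exactly \eqref{eq:Q-via-Pi}.

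The remaining properties of (i) now follow.
\begin{itemize}[leftmargin=2.2em]
  \item \emph{Positivity on $\Sym(V)$.} Every real positive semidefinite matrix is a nonnegative combination of rank-one projections $P_\xi$, and each $\Pi(P_\xi)=L_\xi/k\succeq0$.
  \item \emph{Trace preservation.} $\tr\Pi(P_\xi)=\tr L_\xi/k=1=\tr P_\xi$, and the $P_\xi$ span $\Sym(V)$.
  \item \emph{Unitality.} Use $\Id=m\int P_\xi\ds$ with normalized measure, so $\Pi(\Id)=\tfrac mk\int L_\xi\ds$. The right-hand side commutes with the isotropy group. That group ($O(n)$, $U(n)$, $Sp(n)Sp(1)$, Spin$(9)$) acts irreducibly on $V$, so the integral is scalar, and taking traces shows the scalar is $1$.
  \item \emph{Properties of $\cC$.} $\cC(P_\xi)=Q_\xi$, and $\cC(\Id)=(k-1)\Id$. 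Positivity of $\cC$ on real positive semidefinite matrices follows in the same way from $Q_\xi\succeq0$.
\end{itemize}

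For (iii), I read the complex extension as the map on $M_m(\C)=\Sym_\R\oplus\mathfrak{so}\oplus i(\Sym_\R\oplus\mathfrak{so})$ that extends $\Pi$ by zero on real skew matrices. This is the HS-orthogonal projection of all real matrices onto $W$, complexified. A Hermitian $X$ decomposes as $X=S+iA$ with $S$ real symmetric and $A$ real skew, and then the extension gives $\Pi(X)=\Pi(S)$. Moreover $X\succeq0$ implies $S=\tfrac12(X+\bar X)\succeq0$, so $\Pi(X)\succeq0$ by the real positivity. Unitality carries over directly.

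As a sanity check in the associative cases I would also record the explicit Kraus forms: $\Pi(A)=\tfrac12(A+JAJ^{T})$ for $\K=\C$, and $\Pi(A)=\tfrac14\bigl(A+\sum_{a=1}^3I_aAI_a^{T}\bigr)$ for $\K=\HH$. These reproduce $\Pi(P_\xi)=L_\xi/k$.

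The main obstacle is the isoclinic identity for the Cayley plane, where no global complex structures are available. I would first try to verify it in the Clifford model: $9$ anticommuting symmetric involutions $\sigma_i$ on $\R^{16}$, with the Spin$(9)$-orbit description of $\OO$-lines. Failing that, I would fall back on the observation that $L_\eta L_\xi L_\eta$ is invariant under the stabilizer of the pair of lines. That stabilizer acts irreducibly, through Spin$(8)$-type triality, on the $8$-dimensional line $\OO\eta$, so the product is forced to be scalar on $\OO\eta$.
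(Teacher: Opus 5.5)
Your proof is correct, and it reaches the same map $\Pi$ by a genuinely different route.

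\textbf{The two constructions.} The paper works model by model. For $\K=\R,\C,\HH$ it defines $\Pi$ by the Kraus average $\frac1k\sum_\alpha J_\alpha AJ_\alpha^{\mathsf T}$ and reads off idempotence and self-adjointness from the group-averaging structure. For $\OP^2$ it takes the Hilbert--Schmidt projection onto $\Span\{\Id,S_0,\ldots,S_8\}$ and evaluates $\Pi(P_\xi)$ via the Hopf identity $S(a)\xi=\xi$. You instead define $\Pi$ intrinsically as the projection onto $W=\Span\{L_\xi\}$, which is exactly the paper's range in each model. Self-adjointness and idempotence then come for free, and everything rests on one geometric fact: $\K$-lines are mutually isoclinic.

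\textbf{The Cayley case.} Your ``main obstacle'' is in fact short. With the same standard input the paper uses, $L_a=\frac12(\Id+S(a))$, the Clifford relations give $L_bL_aL_b=\frac12\bigl(1+\ip{a}{b}\bigr)L_b$ in two lines.

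\textbf{Unitality.} This is simpler than your isotropy argument. Self-adjointness and trace preservation give $\ip{\Pi(\Id)}{A}_{\mathrm{HS}}=\tr\Pi(A)=\tr A$ for all $A\in\Sym(V)$, hence $\Pi(\Id)=\Id$. Your Schur argument does work, but only because $\int L_\xi\ds$ is symmetric: for $U(n)$ the commutant on $\R^{2n}$ contains the complex structure, so ``commutes with an irreducible group'' alone would not force a scalar.

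\textbf{Part (iii).} The phrase ``complex-linear extension to $M_m(\C)$'' involves a choice on real skew matrices. Your choice, zero on skew parts, agrees with the paper's formula for $\OO$. It does not agree in the associative cases, where the Kraus form acts nontrivially on skew $\K$-linear matrices; for $\K=\C$ it fixes $J$. Both extensions are positive and unital, and they agree on complexified real symmetric matrices, which is all the Davis--Choi--Jensen step in the positive-projection Jensen lemma uses. Your positivity argument via $\frac12(X+\bar X)\succeq0$ is uniform and more elementary than the paper's estimate $|a|\le1$. The Kraus form, in return, gives complete positivity for free in the associative cases.
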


\begin{proof}
We first treat $\K=\R,\C,\HH$.  Regard $V$ as $\K^n$ with its standard
real inner product.  Let $J_0=\Id$ and let $J_1,\ldots,J_{k-1}$ be right
multiplication by an orthonormal basis of the imaginary part of $\K$.  Thus
all $J_\alpha$ are orthogonal, and the imaginary ones are skew-symmetric.
Define
\begin{equation}\label{eq:Pi-associative}
  \Pi(A)=\frac1k\sum_{\alpha=0}^{k-1}
  J_\alpha A J_\alpha^{\mathsf T}.
\end{equation}
For $\K=\C$ or $\HH$, the conjugations
$A\mapsto J_\alpha A J_\alpha^{\mathsf T}$ form a finite group after signs
are discarded.  Hence their average is the orthogonal projection onto the
common fixed-point space.  This proves idempotence and Hilbert--Schmidt
self-adjointness.  Positivity, unitality, and trace preservation follow
immediately from \eqref{eq:Pi-associative}.  The same formula on complex
matrices is an average of unitary conjugations and is therefore positive and
unital.

For a unit vector $\xi$, the vectors
$J_0\xi,\ldots,J_{k-1}\xi$ form an orthonormal basis of the real $k$-plane
$\K\xi$.  Therefore
\[
  \Pi(P_\xi)=\frac1k\sum_{\alpha=0}^{k-1}P_{J_\alpha\xi}
  =\frac1kP_{\K\xi}.
\]
Since the Hopf space is $\cV_\xi=\K\xi\ominus\R\xi$, we have
$Q_\xi=P_{\K\xi}-P_\xi$, proving \eqref{eq:Q-via-Pi}.

We next treat the Cayley plane.  Use the real spin representation
$V=\R^{16}$ of $\Spin(9)$.  Choose a symmetric Clifford system
$S_0,\ldots,S_8$ satisfying
\begin{equation}\label{eq:clifford-system}
  S_\alpha S_\beta+S_\beta S_\alpha
  =2\delta_{\alpha\beta}\Id.
\end{equation}
The matrices $\Id,S_0,\ldots,S_8$ are mutually orthogonal for the
Hilbert--Schmidt product, each with squared norm $16$.  Define
\begin{equation}\label{eq:Pi-cayley}
  \Pi(A)=\frac1{16}\left((\tr A)\Id+
  \sum_{\alpha=0}^{8}\tr(AS_\alpha)S_\alpha\right).
\end{equation}
Thus $\Pi$ is the Hilbert--Schmidt projection onto
$\Span\{\Id,S_0,\ldots,S_8\}$; in particular it is self-adjoint and
idempotent.  It is unital and trace-preserving because
$\tr S_\alpha=0$.

For $\xi\in S^{15}$, put
$a_\alpha=\ip{S_\alpha\xi}{\xi}$ and
$S(a)=\sum_{\alpha=0}^8a_\alpha S_\alpha$.  The standard Clifford-system
model of the octonionic Hopf fibration gives
\begin{equation}\label{eq:octonionic-hopf-identity}
  \sum_{\alpha=0}^{8}a_\alpha^2=1,
  \qquad S(a)\xi=\xi;
\end{equation}
see \cite{FerusKarcherMunzner1981,OrneaPartonPiccinniVuletescu2013}.
By \eqref{eq:clifford-system}, $S(a)^2=\Id$.  Its $+1$ eigenspace has real
dimension $8$ and is the octonionic line through $\xi$.  Denoting its
orthogonal projection by $P_{\OO\xi}$, we obtain
\[
  P_{\OO\xi}=\frac12(\Id+S(a)),
  \qquad
  \Pi(P_\xi)=\frac1{16}(\Id+S(a))=\frac18P_{\OO\xi}.
\]
It follows that $8\Pi(P_\xi)-P_\xi=Q_\xi$, which is
\eqref{eq:Q-via-Pi} for $k=8$.

The last formula also proves positivity of $\Pi$ on real symmetric
matrices, because every positive semidefinite matrix is a nonnegative
linear combination of real rank-one projections.  To verify the positivity
needed for operator Jensen's inequality, extend \eqref{eq:Pi-cayley}
complex-linearly.  For a complex unit vector $z$, the numbers
$a_\alpha=z^*S_\alpha z$ are real.  Since
$S(a)^2=|a|^2\Id$ and
\[
  |a|^2=z^*S(a)z\le \norm{S(a)}=|a|,
\]
we have $|a|\le1$.  Hence
\[
  \Pi(zz^*)=\frac1{16}(\Id+S(a))\succeq0.
\]
By spectral decomposition, the complex extension is positive on all
Hermitian positive semidefinite matrices.

Finally, \eqref{eq:C-spectral-splitting} follows from $\Pi^2=\Pi$.  Since
$\cC(P_\xi)=Q_\xi\succeq0$, the real rank-one decomposition also shows
that $\cC$ is positive on every real positive semidefinite matrix.
\end{proof}

The following consequences will be used repeatedly.

\begin{corollary}[Algebraic consequences]\label{cor:hopf-algebra}
For every $X\in\Sym(V)$,
\begin{equation}\label{eq:C-trace}
  \tr\cC(X)=(k-1)\tr X
\end{equation}
and
\begin{equation}\label{eq:C-quadratic-bound}
  \tr\bigl(X\cC(X)\bigr)\le(k-1)\tr(X^2).
\end{equation}
Moreover,
\begin{equation}\label{eq:isotropy-identities}
 \int_{S(V)}P_\xi\ds(\xi)=\frac{|\sph^{m-1}|}{m}\Id,
 \qquad
 \int_{S(V)}Q_\xi\ds(\xi)
 =\frac{(k-1)|\sph^{m-1}|}{m}\Id.
\end{equation}
\end{corollary}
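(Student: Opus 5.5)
The three claims all reduce to properties of $\Pi$ recorded in \cref{thm:hopf-projection}, together with $\cC=k\Pi-\Id$. We take them in order.

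For the trace identity \eqref{eq:C-trace}, trace preservation of $\Pi$ gives
\[
\tr\cC(X)=k\tr\Pi(X)-\tr X=(k-1)\tr X.
\]

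For \eqref{eq:C-quadratic-bound}, use that $\Pi$ is Hilbert--Schmidt self-adjoint and idempotent, so it is the orthogonal projection of $\Sym(V)$ onto $\Ran\Pi$. Decompose $X=X_1+X_0$ with $X_1=\Pi X\in\Ran\Pi$ and $X_0\in\Ker\Pi$; these two pieces are HS-orthogonal. By \eqref{eq:C-spectral-splitting},
\[
\cC(X)=(k-1)X_1-X_0,
\qquad
\tr(X\cC(X))=(k-1)\tr(X_1^2)-\tr(X_0^2).
\]
Since $\tr(X^2)=\tr(X_1^2)+\tr(X_0^2)$ and $k\ge1$, the right-hand side is at most $(k-1)\tr(X_1^2)+(k-1)\tr(X_0^2)=(k-1)\tr(X^2)$, with the slack being $k\tr(X_0^2)$.

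For the isotropy identities \eqref{eq:isotropy-identities}, first recall that the uniform measure on the sphere is invariant under $O(m)$. Hence $\int P_\xi\ds$ commutes with every orthogonal matrix and is a scalar multiple of $\Id$. Its trace is $\int\tr P_\xi\ds=|\sph^{m-1}|$, so the multiple is $|\sph^{m-1}|/m$.

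For the second identity, \eqref{eq:Q-via-Pi} gives $Q_\xi=\cC(P_\xi)$. Since $\cC$ is linear and independent of $\xi$, it commutes with the integral:
\[
\int Q_\xi\ds=\cC\Bigl(\frac{|\sph^{m-1}|}{m}\Id\Bigr)=\frac{|\sph^{m-1}|}{m}\cC(\Id)=\frac{(k-1)|\sph^{m-1}|}{m}\Id,
\]
using $\cC(\Id)=(k-1)\Id$, which follows from unitality of $\Pi$.

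No step is a real obstacle. The only point needing care is to justify the decomposition used for \eqref{eq:C-quadratic-bound}: $\Ran\Pi$ and $\Ker\Pi$ are HS-orthogonal because $\Pi$ is HS self-adjoint, and this orthogonality is exactly what makes the cross terms in $\tr(X\cC(X))$ vanish.
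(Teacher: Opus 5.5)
Your proposal is correct and follows essentially the same route as the paper. Both use trace preservation for \eqref{eq:C-trace}, the Hilbert--Schmidt orthogonal splitting $X=\Pi X+(\Id-\Pi)X$ with \eqref{eq:C-spectral-splitting} for \eqref{eq:C-quadratic-bound}, and linearity of $\cC$ with $\cC(\Id)=(k-1)\Id$ for the second isotropy identity. Your only addition is an explicit $O(m)$-invariance justification of the first isotropy identity, which the paper simply calls standard.
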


\begin{proof}
The trace identity follows from trace preservation of $\Pi$.  Write
$X=X_1+X_0$ with $X_1=\Pi X\in\Ran\Pi$ and
$X_0=(\Id-\Pi)X\in\Ker\Pi$.  The decomposition is Hilbert--Schmidt
orthogonal, and \eqref{eq:C-spectral-splitting} gives
\[
 \tr(X\cC(X))=(k-1)\norm{X_1}_{\mathrm{HS}}^2
 -\norm{X_0}_{\mathrm{HS}}^2
 \le(k-1)\norm{X}_{\mathrm{HS}}^2.
\]
The first isotropy identity is standard.  Apply \eqref{eq:Q-via-Pi}, use
linearity, and use $\Pi(\Id)=\Id$ to obtain the second.
\end{proof}

\begin{remark}\label{rem:constant-curvature-endpoints}
When $k=1$, $Q_\xi=0$ and $\cC=0$.  When $n=1$, the horizontal space
vanishes and $Q_\xi=\Id-P_\xi$.  Thus, for any scalar function $b(\xi)$
with zero spherical mean,
\begin{equation}\label{eq:n-one-W-minus-Z}
  \int_{S(V)}b(\xi)Q_\xi\ds(\xi)
  =-\int_{S(V)}b(\xi)P_\xi\ds(\xi).
\end{equation}
These are precisely the constant-curvature endpoint cases in the compact
argument.
\end{remark}

\section{Variational, centering, and rearrangement tools}\label{sec:tools}

\subsection{The trace Ritz principle}

The coupled argument uses the entire trial space at once.  The appropriate
variational statement is the following trace form of Hersch's principle;
compare \cite{Hersch1961,HileXu1993,HeLiTang2026}.

\begin{lemma}[Trace Ritz principle]\label{lem:trace-ritz}
Let $\Omega$ be connected, and let $P_1,\ldots,P_m\in H^1(\Omega)$ be
linearly independent in $L^2(\Omega)$ and satisfy
$\int_\Omega P_i\dv=0$.  Define
\begin{equation}\label{eq:mass-stiffness-def}
  M_{ij}=\int_\Omega P_iP_j\dv,
  \qquad
  K_{ij}=\int_\Omega\ip{\nabla P_i}{\nabla P_j}\dv.
\end{equation}
Then $M$ and $K$ are positive definite, and
\begin{equation}\label{eq:trace-ritz}
  \sum_{j=1}^{m}\frac1{\mu_j(\Omega)}
  \ge \tr(K^{-1}M).
\end{equation}
\end{lemma}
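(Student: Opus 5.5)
We prove \cref{lem:trace-ritz} by combining positivity of the Gram matrices with the variational characterization of sums of reciprocal eigenvalues, written as a maximum of a trace over trial subspaces of the zero-mean space.

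\emph{Positivity.} $M$ is the $L^2$ Gram matrix of linearly independent functions, so $c^{\mathsf T}Mc=\norm{\sum c_iP_i}_{L^2}^2>0$ for $c\neq0$. For $K$, we have $c^{\mathsf T}Kc=\norm{\nabla u}^2$ with $u=\sum c_iP_i$. If this vanishes, then $u$ is constant because $\Omega$ is connected. Since $u$ has zero mean, $u=0$, which contradicts independence. Hence $K\succ0$.

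\emph{Reduction to an orthonormal-in-energy basis.} Let $\{u_j\}_{j\ge1}$ be $L^2$-orthonormal Neumann eigenfunctions with eigenvalues $\mu_j$; they span the zero-mean subspace $L^2_0(\Omega)$. Choose a matrix $L$ with $L^{\mathsf T}KL=\Id$, for instance $L=K^{-1/2}$, and set $f_a=\sum_i L_{ia}P_i$. Then $\int\ip{\nabla f_a}{\nabla f_b}=\delta_{ab}$, and
\[
\sum_a\norm{f_a}^2=\tr(L^{\mathsf T}ML)=\tr(K^{-1}M).
\]

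\emph{Bounding $\sum_a\norm{f_a}^2$.} Since each $f_a\in H^1$ has zero mean, its spectral expansion gives $f_a=\sum_j \alpha_{aj}u_j$, where $\alpha_{aj}=\int f_au_j$. Testing the weak eigen-equation with $f_a$ yields $\alpha_{aj}=\mu_j^{-1}\int\ip{\nabla f_a}{\nabla u_j}$. Put $\beta_{aj}=\sqrt{\mu_j}\,\alpha_{aj}$. The functions $u_j/\sqrt{\mu_j}$ are orthonormal in the energy inner product on $H^1\cap L^2_0$, so $\beta_{aj}$ are the energy-coefficients of $f_a$. Because the $f_a$ are energy-orthonormal, the vectors $(\beta_{aj})_j$ are orthonormal in $\ell^2$ by Bessel/Parseval in the energy Hilbert space. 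Consequently the weights $w_j=\sum_{a=1}^m\beta_{aj}^2$ satisfy $0\le w_j\le1$ and $\sum_j w_j\le m$: the first is a diagonal entry of an orthogonal projection, the second is its trace. Therefore
\[
\tr(K^{-1}M)=\sum_a\norm{f_a}^2=\sum_j \frac{w_j}{\mu_j}.
\]

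\emph{Conclusion.} Since $\mu_j^{-1}$ is nonincreasing, the linear functional $\sum_j w_j\mu_j^{-1}$ under the constraints $0\le w_j\le1$ and $\sum_j w_j\le m$ is maximized by $w_1=\dots=w_m=1$ and all other weights zero. This gives $\tr(K^{-1}M)\le\sum_{j=1}^m\mu_j^{-1}$.

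The only delicate point is the justification that the energy coefficients are complete for $H^1\cap L^2_0$. This is the standard fact that $\{u_j/\sqrt{\mu_j}\}$ is an orthonormal basis of $H^1\cap L^2_0$ with the Dirichlet form, which follows from the compact resolvent of the Neumann Laplacian on a smooth bounded domain. Given that fact, the argument is a finite-dimensional majorization.
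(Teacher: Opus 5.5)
Your argument is correct, but it reaches the inequality by a different mechanism than the paper. The paper takes the Ritz values $\eta_1\le\cdots\le\eta_m$ of the Neumann form on $\Span\{P_1,\ldots,P_m\}\subset L^2_0(\Omega)$. It applies min--max to get the termwise bound $\mu_j(\Omega)\le\eta_j$, and then identifies $\sum_j\eta_j^{-1}$ with the trace of $K^{-1/2}MK^{-1/2}$.

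You instead prove a Ky Fan-type maximum principle by hand. In the energy-orthonormal basis $f_a$ you expand in the Neumann eigenbasis and write $\tr(K^{-1}M)=\sum_j w_j/\mu_j$ with $0\le w_j\le1$ and $\sum_j w_j\le m$. You then conclude by a bathtub argument using the monotonicity of $\mu_j^{-1}$.

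The paper's route is shorter and gives the stronger termwise comparison $\mu_j\le\eta_j$, but it uses min--max as a black box. Yours replaces min--max by the eigenfunction expansion and yields only the summed inequality, which is all that is used later.

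The point you single out as delicate is in fact not needed. Write $\ip{\cdot}{\cdot}_E$ for the Dirichlet form. Then $w_j=\sum_a\ip{u_j/\sqrt{\mu_j}}{f_a}_E^2\le1$ is Bessel's inequality for the finite orthonormal system $\{f_a\}$. Likewise $\sum_j w_j\le\sum_a\norm{f_a}_E^2=m$ is Bessel's inequality for $\{u_j/\sqrt{\mu_j}\}$. The only completeness you actually use is that of $\{u_j\}_{j\ge1}$ in $L^2_0(\Omega)$, for the identity $\norm{f_a}_{L^2}^2=\sum_j\alpha_{aj}^2$. Energy completeness follows from it anyway, since $\ip{f}{u_j}_E=\mu_j\int_\Omega fu_j\dv$.
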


\begin{proof}
The matrix $M$ is the $L^2$ Gram matrix, hence is positive definite.  If
$a^{\mathsf T}Ka=0$, then the function
$P_a=\sum_i a_iP_i$ has zero weak gradient and is constant on the connected
domain $\Omega$.  Its mean is zero, so it vanishes.  Linear independence
then gives $a=0$, and $K$ is positive definite.

Let $0<\eta_1\le\cdots\le\eta_m$ be the Ritz values of the Neumann
quadratic form restricted to $\Span\{P_1,\ldots,P_m\}$.  Since this trial
space is contained in the orthogonal complement of the constants, the
min--max principle gives $\mu_j(\Omega)\le\eta_j$.  The $\eta_j$ are the
generalized eigenvalues of $Ka=\eta Ma$, so their reciprocals are the
eigenvalues of $K^{-1/2}MK^{-1/2}$.  Therefore
\[
  \sum_{j=1}^{m}\frac1{\mu_j(\Omega)}
  \ge\sum_{j=1}^{m}\frac1{\eta_j}
  =\tr(K^{-1/2}MK^{-1/2})=\tr(K^{-1}M).
\]
\end{proof}

\subsection{A Weinberger center and the trial space}

For $p\in M$ and $x$ outside the cut locus of $p$, write
\[
 r_p(x)=\dist(p,x),\qquad
 \xi_p(x)=\frac{\exp_p^{-1}(x)}{r_p(x)}.
\]
At $x=p$ the vector-valued expressions below are set equal to zero.

\begin{lemma}[Weighted Weinberger center]\label{lem:weinberger-center}
Let $G$ be the function in \eqref{eq:G-extension}.  In noncompact type
there exists $o\in M$ such that
\begin{equation}\label{eq:center-vector}
  \int_\Omega G(r_o(x))\xi_o(x)\dv(x)=0.
\end{equation}
In compact type, under \cref{ass:compact-localization}, such a point can be
chosen in the closed containing ball
$\overline{B_{i(M)/4}(q)}$.  Consequently
$r_o(x)\le\pi/4$ for $x\in\Omega$.
\end{lemma}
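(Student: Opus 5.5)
The plan is the classical Brouwer fixed-point argument of Weinberger, in the form used for rank-one spaces by Aithal--Santhanam and Meng--Wang. For $p\in M$ define the vector field
\[
  W(p):=\int_\Omega G(r_p(x))\,\xi_p(x)\dv(x)\in T_pM
  \qquad\text{equivalently}\qquad
  W(p)=\int_\Omega \frac{G(r_p(x))}{r_p(x)}\exp_p^{-1}(x)\dv(x).
\]
Continuity of $W$ comes first. $G$ is continuous with $G(0)=0$, so the integrand is bounded and continuous off the diagonal $x=p$, which has measure zero. Off the cut locus of $p$, $\exp_p^{-1}$ depends continuously on $p$, so dominated convergence gives continuity. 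In noncompact type there is no cut locus. In compact type we restrict to $p\in\overline{B_{i(M)/4}(q)}$. Then every $x\in\Omega\subset B_{i(M)/4}(q)$ satisfies $r_p(x)<i(M)/2=\pi/4$, well inside the injectivity radius, so everything is smooth there.

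\emph{Compact type.} Work in normal coordinates at $q$ on $D:=\overline{B_{i(M)/4}(q)}$, which is a closed topological ball. For $p\in\partial D$, I claim $W(p)$ points strictly outward, i.e.\ $\ip{W(p)}{-\xi_p(q)}>0$. The key is that the containing ball has radius $\pi/8<$ the convexity radius $\pi/4$ of $M$. Hence $B_{i(M)/4}(q)$ is strongly convex, and for $p\in\partial D$ and $x\in\Omega$ the initial direction $\xi_p(x)$ lies in the open half-space $\ip{\xi_p(x)}{-\xi_p(q)}>0$. (The ball lies strictly on one side of the supporting hypersurface at $p$, and geodesics from $p$ into a convex ball enter it.) Since $G(r_p(x))>0$ for $x\ne p$, integrating gives the claim.

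Next consider $F(p):=\exp_p(-tW(p))$ for small $t$, or more simply the map $\phi(y)=y-t\tilde W(y)$ on the coordinate disc, where $\tilde W$ is $W$ expressed in normal coordinates. Outward pointing on the boundary lets us deform $\tilde W$ on $\partial D$ to the radial field $y$ through nonvanishing fields. By the Poincaré--Hopf/Brouwer degree argument (the standard ``a continuous vector field on a closed ball pointing outward on the boundary has a zero''), $W$ vanishes at some $o\in D$. This is exactly \eqref{eq:center-vector}. Then $r_o(x)\le r_o(q)+r_q(x)\le \pi/8+\pi/8=\pi/4$ for $x\in\Omega$.

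\emph{Noncompact type.} I would run the same argument on a large closed geodesic ball $\overline{B_\rho(q)}$, using that geodesic balls are convex in a Hadamard manifold. For $p$ with $r_q(p)=\rho$ large, every $x\in\Omega$ is at distance at least $\rho-\operatorname{diam}$ from $p$. The directions $\xi_p(x)$ then all lie in a small cone around $\xi_p(q)$, by comparison in a Hadamard space, since the visual angle of $\Omega$ from $p$ tends to $0$. Moreover $G(r_p(x))=g(R)>0$ there. Hence $\ip{W(p)}{\xi_p(q)}>0$, i.e.\ $W$ points inward, and $-W$ points outward. The same degree argument again produces a zero.

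The main obstacle I expect is the rigorous boundary-sign step in compact type: justifying strict convexity of $B_{\pi/8}(q)$ in the nonconstant-curvature projective spaces with curvature in $[1,4]$, and hence that all of $\Omega$ is seen from $p\in\partial D$ in an open half-space. I would first cite the convexity radius $\pi/4$ for this normalization (equal to $\frac12 i(M)$, since $\pi/(2\sqrt{K_{\max}})=\pi/4$). A secondary technical point is continuity of $W$ near $x=p$. This is handled by the bound $|G(r)\xi|\le g(R)$ and the fact that $\{p\}$ is null.
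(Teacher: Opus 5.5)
Your argument is correct, but it follows a different route from the paper. You use the classical Brouwer-degree form of the centering lemma; the paper argues variationally. It sets $\Phi(t)=\int_0^tG$ and minimizes $\mathscr F(p)=\int_\Omega\Phi(\dist(p,x))\dv(x)$. Since $D\mathscr F(p)[v]=-\ip{v}{W(p)}$, your field $W$ is exactly $-\nabla\mathscr F$.

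\textbf{Noncompact type.} The paper gets a critical point from coercivity: $\Phi$ grows linearly because $G\equiv g(R)>0$ beyond $R$. So it needs no geometry at infinity. You instead need the boundary sign on a large sphere. Your CAT(0) visual-angle argument works, but convexity of $t\mapsto r_q(\gamma(t))$ along the geodesic $\gamma$ from $p\in\partial B_\rho(q)$ to $x\in\Omega\subset B_\rho(q)$ is simpler. It gives $\ip{\xi_p(x)}{\xi_p(q)}>0$ for every such $\rho$, not just large ones.

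\textbf{Compact type.} Both routes rest on the same fact, the strong convexity of $\overline{B_{\pi/8}(q)}$. The paper uses it to rule out a minimizer on $\partial C$; you use it to fix the boundary sign for the degree argument.

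\textbf{Trade-off.} The paper must differentiate under the integral, which is harmless since $\Phi'(0)=G(0)=0$, but it avoids degree theory. You need only continuity of $W$, plus the Gauss lemma to turn the Riemannian boundary sign into a sign for $\ip{\tilde W(y)}{y}$ in normal coordinates at $q$.

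\textbf{A sign slip to fix.} In the compact case, geodesics from $p\in\partial D$ into the ball point inward. So $\ip{\xi_p(x)}{\xi_p(q)}>0$ and $W(p)$ points strictly inward, not outward as you wrote; your parenthetical justification in fact proves the inward statement. The conclusion is unaffected: an inward-pointing field on the ball has degree $(-1)^m\neq0$. Equivalently, apply your argument to $-W$, exactly as you did in the noncompact case.
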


\begin{proof}
Set
\[
  \Phi(t)=\int_0^tG(s)\dd s.
\]
In noncompact type consider
$\mathscr F(p)=\int_\Omega\Phi(\dist(p,x))\dv(x)$.  Since $G$ is bounded
and eventually equal to the positive constant $g(R)$, $\Phi(t)$ has linear
growth with positive slope.  The boundedness of $\Omega$ therefore implies
$\mathscr F(p)\to\infty$ as $p$ escapes to infinity.  Thus $\mathscr F$
has a minimizer $o$.  The first variation of distance and dominated
convergence give, for every $v\in T_oM$,
\[
  D\mathscr F(o)[v]
  =-\int_\Omega G(r_o(x))\ip{v}{\xi_o(x)}\dv(x).
\]
The derivative vanishes at a minimizer, proving \eqref{eq:center-vector}.
This energy-minimization form of the centering argument is analogous to
\cite{Laugesen2021}.

In compact type let
$C=\overline{B_{i(M)/4}(q)}$.  This ball lies strictly inside the convexity
radius and is strongly geodesically convex.  The same functional is
well-defined on $C$, because the distance between two points of $C$ is at
most $\pi/4$, and it attains a minimum there.  If a minimizer $p$ lay on
$\partial C$, every minimizing geodesic from $p$ to a point of the open set
$\Omega\subset C$ would initially point into $C$.  If $\nu$ denotes the
outer normal of $\partial C$, this gives
$\ip{\xi_p(x)}{\nu}<0$ on a set of positive measure.  Hence
\[
 D\mathscr F(p)[-\nu]
 =\int_\Omega G(r_p(x))\ip{\xi_p(x)}{\nu}\dv(x)<0,
\]
contradicting minimality.  The minimizer is therefore interior and its
first variation vanishes in every direction.  This proves
\eqref{eq:center-vector}.  The distance estimate follows from the triangle
inequality and $i(M)=\pi/2$.
\end{proof}

Fix such a center $o$ and put $V=T_oM$.  For $v\in V$, define
\begin{equation}\label{eq:trial-Pv}
  P_v(x)=G(r)\omega_v(\xi)=G(r)\ip{v}{\xi},
  \qquad x=\exp_o(r\xi).
\end{equation}
Choose an orthonormal basis $e_1,\ldots,e_m$ of $V$ and write
$P_i=P_{e_i}$.

\begin{lemma}[Admissibility of the coupled trial family]
\label{lem:trial-admissible}
The functions $P_1,\ldots,P_m$ belong to $H^1(\Omega)$, have zero mean, and
are linearly independent in $L^2(\Omega)$.  Their mass and stiffness
matrices in \eqref{eq:mass-stiffness-def} are therefore positive definite,
and \eqref{eq:trace-ritz} applies.
\end{lemma}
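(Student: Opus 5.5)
We verify the three claims in turn: $H^1$ regularity, zero mean, and linear independence.

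\emph{Regularity.} On $\Omega\setminus\{o\}$ the polar coordinates $(r,\xi)$ are smooth. In compact type this uses $r_o\le\pi/4<i(M)$ from \cref{lem:weinberger-center}, so $\Omega$ avoids the cut locus of $o$. The function $G$ is Lipschitz on $[0,\infty)$, since $g$ is smooth on $[0,R]$ and constant afterwards. Hence $P_v$ is locally Lipschitz away from $o$, with
\[
  |\nabla P_v|^2=G'(r)^2\ip{v}{\xi}^2+G(r)^2\ip{\cA_r(\xi)v}{v}
\]
by \eqref{eq:angular-bilinear}. Near the pole we have $G(r)=g(r)\sim r$ because $g(0)=0$ and $g'(0)=1$. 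In either type $\cA_r(\xi)=O(r^{-2})$, so $G^2\cA_r$ is bounded. The gradient is therefore bounded on $\Omega\setminus\{o\}$ and $P_v$ is Lipschitz on $\Omega$. A point has zero capacity when $m\ge2$, so the weak gradient coincides with the pointwise one, and $P_v\in H^1(\Omega)$. Alternatively, $P_v$ near $o$ is $\tilde G(|y|)\ip{v}{y}$ in normal coordinates with $\tilde G(s)=G(s)/s$ bounded, which makes $P_v$ Lipschitz directly.

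\emph{Zero mean.} This is exactly \eqref{eq:center-vector}: pairing that vector with $e_i$ gives $\int_\Omega P_i\dv=0$.

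\emph{Linear independence.} This is the only point needing an argument. Suppose $\sum a_iP_i=P_a=0$ a.e.\ on $\Omega$ with $a\ne0$. Since $G(r)>0$ for $r>0$ by \eqref{eq:g-positive} and the constant extension, this forces $\ip{a}{\xi_o(x)}=0$ for a.e.\ $x\in\Omega$. The set $\{x:\ip{a}{\xi_o(x)}=0\}$ is the image under $\exp_o$ of the hyperplane $a^\perp$, together with the cut locus in compact type. This set is a null set. But $\Omega$ is a nonempty open set of positive measure, which is a contradiction, so $a=0$.

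Positive definiteness of $M$ and $K$ and the inequality \eqref{eq:trace-ritz} then follow directly from \cref{lem:trace-ritz}, using connectedness of $\Omega$. The only mildly delicate step is the behavior at the pole. It is resolved by the normalization $g'(0)=1$ together with the $r^{-2}$ size of $\cA_r$.
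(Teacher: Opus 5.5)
Your proposal is correct and follows essentially the same route as the paper. Regularity comes from $G(r)\sim r$ at the pole combined with the angular energy formula \eqref{eq:angular-bilinear}, the zero-mean conditions come from the centering identity \eqref{eq:center-vector}, and independence comes from positivity of $G$ together with thinness of the set where $\ip{a}{\xi}=0$. The only cosmetic difference is in the last step: you phrase it as $\exp_o(a^\perp)$ being a null set, whereas the paper uses the continuous representative and the fact that the equatorial set of directions has empty interior.
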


\begin{proof}
Equation \eqref{eq:center-vector} gives the zero-mean conditions.  At the
pole, regularity of the degree-one solution gives
$G(r)=g'(0)r+O(r^3)$.  Hence $G(r)/r$ and $G'(r)$ are locally bounded, and
\eqref{eq:angular-bilinear} shows that $P_v$ has locally square-integrable
gradient.  Away from the pole the assertion is immediate, while outside
$B_R(o)$ the radial derivative vanishes.  Thus $P_v\in H^1(\Omega)$.

Suppose $P_v=0$ almost everywhere.  The function has a continuous
representative, so it vanishes on $\Omega$.  For $r>0$, $G(r)>0$; hence
$\ip{v}{\xi}=0$ for every point of a nonempty open subset in polar
coordinates.  If $v\ne0$, the corresponding equatorial set of directions
has empty interior, a contradiction.  Therefore $v=0$, proving linear
independence.  The last statement follows from \cref{lem:trace-ritz}.
\end{proof}

\subsection{A weighted ray rearrangement}

We use a one-dimensional bathtub principle together with the tangent line
of the resulting distribution function.  This is the mechanism that
retains the angular imbalance instead of discarding it.  See
\cite{LiebLoss2001,HeLiTang2026} for related forms.

\begin{lemma}[Weighted ray rearrangement and tangent line]
\label{lem:ray-rearrangement}
Let $I=(0,L)$, where $L\le\infty$, and let $J>0$ be locally integrable.
Set
\[
  V(t)=\int_0^tJ(s)\dd s,
\]
and let $\tau$ be the inverse of $V$ on its range.  If $E\subset I$ is
measurable, put $Y(E)=\int_EJ(r)\dd r$.  For a continuous weight $w$ for
which the following integrals are finite, define
\[
  F(y)=\int_0^{\tau(y)}w(r)J(r)\dd r.
\]
Then:
\begin{enumerate}[label=\textup{(\roman*)},leftmargin=2.2em]
 \item if $w$ is nondecreasing,
 \[
  \int_EwJ\dd r\ge F(Y(E)),
 \]
 and $F$ is convex;
 \item if $w$ is nonincreasing,
 \[
  \int_EwJ\dd r\le F(Y(E)),
 \]
 and $F$ is concave.
\end{enumerate}
Consequently, for $Y_R=V(R)$,
\begin{equation}\label{eq:tangent-increasing}
 F(y)\ge F(Y_R)+w(R)(y-Y_R)
\end{equation}
when $w$ is nondecreasing, while the inequality is reversed when $w$ is
nonincreasing.  If $w$ is strictly decreasing and equality holds in the
rearrangement for $0<Y(E)<V(L)$, then
$E=(0,\tau(Y(E)))$ modulo $J(r)\dd r$-null sets.
\end{lemma}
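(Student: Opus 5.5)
The plan is to reduce everything to the one-dimensional bathtub principle in the measure $J(r)\dd r$, via the change of variable $y=V(r)$. Since $J>0$, $V$ is strictly increasing and continuous, so $\tau$ is a strictly increasing continuous inverse on $[0,V(L))$. Writing $F(y)=\int_0^{\tau(y)}wJ\dd r$ and substituting $s=V(r)$ gives $F(y)=\int_0^y w(\tau(s))\dd s$. Thus $F$ is absolutely continuous with $F'(y)=w(\tau(y))$ wherever $w\circ\tau$ is continuous, which is everywhere since $w$ is continuous. Convexity or concavity of $F$ then follows at once from the monotonicity of $w\circ\tau$: it is nondecreasing in case (i) and nonincreasing in case (ii).

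The rearrangement inequality is the bathtub step. Let $E$ be measurable with $y=Y(E)$ finite, and put $E^*=(0,\tau(y))$, so that $\int_{E^*}J=\int_EJ$. Then
\[
\int_EwJ-\int_{E^*}wJ=\int_{E\setminus E^*}wJ-\int_{E^*\setminus E}wJ .
\]
The two difference sets have equal $J$-measure. In case (i), $w\ge w(\tau(y))$ on $E\setminus E^*$ and $w\le w(\tau(y))$ on $E^*\setminus E$, so the difference is at least $w(\tau(y))\bigl(Y(E\setminus E^*)-Y(E^*\setminus E)\bigr)=0$. Case (ii) is the same argument with the inequalities reversed. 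Finiteness of the integrals is used here to justify the subtraction. If $Y(E)=V(L)$ is infinite, I would either exclude it or read the statement with the convention that $F$ is taken at the full range.

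The tangent-line inequality \eqref{eq:tangent-increasing} is the supporting-line property of a convex function at $Y_R$. The slope is $F'(Y_R)=w(\tau(Y_R))=w(R)$, and the inequality reverses for concave $F$. One can also see it directly from
\[
F(y)-F(Y_R)-w(R)(y-Y_R)=\int_{Y_R}^y\bigl(w(\tau(s))-w(R)\bigr)\dd s\ge0
\]
in case (i), checking both signs of $y-Y_R$.

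For the equality statement, suppose $w$ is strictly decreasing and $0<y<V(L)$. In the splitting above, $w>w(\tau(y))$ strictly on $E\setminus E^*$ except at a single point, and $w<w(\tau(y))$ on the interior of $E^*\setminus E$. Equality forces
\[
\int_{E\setminus E^*}\bigl(w(\tau(y))-w\bigr)J+\int_{E^*\setminus E}\bigl(w-w(\tau(y))\bigr)J=0,
\]
where both integrands are nonnegative, and in fact strictly positive off the point $r=\tau(y)$. Since $J>0$, both sets are $J\dd r$-null, so $E=E^*$ modulo null sets.

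The only mild obstacle is bookkeeping rather than ideas. One has to justify the splitting when some pieces have infinite weighted mass (for example $L=\infty$), and in the equality case handle the boundary point $r=\tau(y)$ and the strictness. I would simply assume finiteness, as the statement permits.
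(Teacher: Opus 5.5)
Your argument is correct and is essentially the paper's proof. It uses the same exchange between $E\setminus E^*$ and $E^*\setminus E$, which have equal $J$-measure, and it gets convexity or concavity and the tangent line from $F'(y)=w(\tau(y))$; your representation $F(y)=\int_0^y w(\tau(s))\dd s$ only makes that step more explicit. One small fix: in the equality paragraph the inequalities are stated backwards for strictly decreasing $w$. The correct statements are $w<w(\tau(y))$ on $E\setminus E^*$ (off the point $\tau(y)$) and $w>w(\tau(y))$ on all of $E^*\setminus E$, which is exactly what your displayed identity with nonnegative integrands already uses.
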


\begin{proof}
Let $I_y=(0,\tau(y))$.  Since $E$ and $I_{Y(E)}$ have the same
$J(r)\dd r$ measure, the exchanged parts
$E\setminus I_{Y(E)}$ and $I_{Y(E)}\setminus E$ have the same measure.  An
increasing weight is no smaller on the first set than on the second, while
a decreasing weight is no larger.  This proves the two rearrangement
inequalities.  Strict monotonicity gives the equality statement by the same
exchange argument.

Since $F'(y)=w(\tau(y))$, the derivative is nondecreasing or nonincreasing
with $y$.  This proves convexity or concavity and the corresponding tangent
inequalities.
\end{proof}

\section{Coupled mass and stiffness comparisons}\label{sec:coupled-comparison}

Let $o$ be the center from \cref{lem:weinberger-center}.  In noncompact type
set $L=\infty$; in compact type set $L=\pi/4$.  For almost every
$\xi\in S(V)$ define the radial section
\begin{equation}\label{eq:E-xi}
 E_\xi=\{r\in(0,L):\exp_o(r\xi)\in\Omega\},
 \qquad
 Y(\xi)=\int_{E_\xi}J(r)\dd r.
\end{equation}
Put
\begin{equation}\label{eq:YR}
  Y_R=\int_0^RJ(r)\dd r.
\end{equation}
By polar coordinates and $|\Omega|=|B_R|$,
\begin{equation}\label{eq:volume-balance-rays}
  \int_{S(V)}(Y(\xi)-Y_R)\ds(\xi)=0.
\end{equation}

\subsection{The angular imbalance matrices}

Define
\begin{equation}\label{eq:Z-def}
  Z=\int_{S(V)}(Y(\xi)-Y_R)P_\xi\ds(\xi)
\end{equation}
and
\begin{equation}\label{eq:W-def}
  W=\int_{S(V)}(Y(\xi)-Y_R)Q_\xi\ds(\xi).
\end{equation}
The volume balance gives
\begin{equation}\label{eq:Z-trace-free}
  \tr Z=0,
\end{equation}
and \cref{thm:hopf-projection} gives the decisive coupling relation
\begin{equation}\label{eq:W-CZ}
  W=\cC(Z)=k\Pi Z-Z.
\end{equation}
Thus the Hopf imbalance is a fixed linear image of the ordinary angular
second moment.

The nonnegativity of the ray volumes also gives a two-sided order estimate.
Let
\begin{equation}\label{eq:b-def}
  b=\frac{|\sph^{m-1}|Y_R}{m}.
\end{equation}
Then
\begin{equation}\label{eq:Z-order-bound}
  -b\Id\preceq Z\preceq(m-1)b\Id.
\end{equation}
Indeed,
\[
 Z+b\Id=\int_{S(V)}Y(\xi)P_\xi\ds(\xi)\succeq0.
\]
Moreover, since $P_\xi\preceq\Id$,
\[
 Z+b\Id\preceq\left(\int_{S(V)}Y(\xi)\ds(\xi)\right)\Id
 =|\sph^{m-1}|Y_R\Id,
\]
which proves the upper bound.

Set
\begin{equation}\label{eq:a-c-rho-X}
 a=\frac{|\sph^{m-1}|}{m}A_R,
 \qquad c=G(R)^2,
 \qquad \rho=\frac{cY_R}{A_R},
 \qquad X=\frac caZ.
\end{equation}
Then
\begin{equation}\label{eq:X-order-bound}
  -\rho\Id\preceq X\preceq(m-1)\rho\Id,
  \qquad \tr X=0.
\end{equation}

\subsection{Exact matrix formulas and the mass lower bound}

In the basis $e_1,\ldots,e_m$ of $V$, the exact mass matrix is
\begin{equation}\label{eq:M-exact}
  M=\int_{S(V)}\left(\int_{E_\xi}G(r)^2J(r)\dd r\right)
  P_\xi\ds(\xi).
\end{equation}
Since $G^2$ is nondecreasing, \cref{lem:ray-rearrangement} and the tangent
inequality at $Y_R$ give, on each ray,
\begin{equation}\label{eq:mass-ray-bound}
  \int_{E_\xi}G^2J\dd r
  \ge A_R+c(Y(\xi)-Y_R).
\end{equation}
Integration and the first identity in \eqref{eq:isotropy-identities} yield
\begin{equation}\label{eq:M-lower}
  M\succeq a\Id+cZ=a(\Id+X).
\end{equation}
We emphasize that $\Id+X$ need not be positive; the independent fact
$M\succ0$ will be retained in the compact closure argument.

By the radial-angular orthogonality and
\eqref{eq:angular-bilinear}, the exact stiffness matrix is
\begin{equation}\label{eq:K-exact}
 K=\int_{S(V)}\int_{E_\xi}
 \left(G'(r)^2P_\xi+G(r)^2\cA_r(\xi)\right)
 J(r)\dd r\ds(\xi).
\end{equation}

\subsection{The stiffness upper bounds}

\begin{proposition}[Coupled stiffness comparison]
\label{prop:stiffness-comparison}
With $a,c,Z,W$ as above, the following estimates hold.

In noncompact type,
\begin{equation}\label{eq:K-upper-noncompact}
  K\preceq \lambda_Ra\Id-dZ-eW
  =\lambda_Ra\Id-dZ-e\cC(Z),
\end{equation}
where
\begin{equation}\label{eq:d-e-noncompact}
  d=\frac{c}{\sinh^2R},
  \qquad e=\frac{c}{\cosh^2R}.
\end{equation}

In compact type,
\begin{equation}\label{eq:K-upper-compact}
  K\preceq \lambda_Ra\Id-dZ+eW
  =\lambda_Ra\Id-dZ+e\cC(Z),
\end{equation}
where
\begin{equation}\label{eq:d-e-compact}
  d=\frac{c}{\sin^2R},
  \qquad e=\frac{c}{\cos^2R}.
\end{equation}
\end{proposition}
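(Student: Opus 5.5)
The plan is to start from the exact formula \eqref{eq:K-exact} and bound each of its three radial pieces raywise with \cref{lem:ray-rearrangement}. Substituting \eqref{eq:A-noncompact} or \eqref{eq:A-compact} splits the integrand into a radial term $G'^2 J\,P_\xi$, a term $w_0 J\,(\Id-P_\xi-Q_\xi)$ and a term $w_1 J\,Q_\xi$. In noncompact type the weights are $w_0=G^2/\sinh^2r$ and $w_1=G^2/(\sinh^2r\cosh^2r)$; in compact type they are $G^2/\sin^2r$ and $G^2/(\sin^2r\cos^2r)$. Each coefficient matrix $P_\xi$, $Q_\xi$, $\Id-P_\xi-Q_\xi$ is positive semidefinite. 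Therefore a scalar upper bound for the radial integral over $E_\xi$ gives a Loewner upper bound after multiplying by the matrix and integrating over $S(V)$.

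The radial term is handled first. Since $G'$ vanishes beyond $R$ and $(G')^2\ge 0$, we get $\int_{E_\xi}(G')^2J\dd r\le Q_R$ on every ray. The tangent correction for this term is zero, because the weight $(G')^2$ vanishes at and beyond $R$. For the angular terms, \cref{lem:radial-monotonicity} says $w_0,w_1$ are nonincreasing. Part (ii) of \cref{lem:ray-rearrangement} together with the reversed tangent inequality then gives
\[
\int_{E_\xi}w_iJ\dd r\le H_{i,R}^{\pm}+w_i(R)(Y(\xi)-Y_R),
\]
where $w_0(R)=c/\sinh^2R$ and $w_1(R)=c/(\sinh^2R\cosh^2R)$, with the trigonometric analogues in compact type. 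One delicate point in compact type is that $L=\pi/4$ and $w_0=G^2/\sin^2 r$ is nonincreasing only after combining with $\cos^2 r$. \Cref{lem:radial-monotonicity} states precisely that both weights are nonincreasing on $(0,\pi/4)$, so the lemma applies on $I=(0,L)$.

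Next we integrate over $S(V)$ and regroup. Using \eqref{eq:isotropy-identities}, the constant parts give
\[
\frac{|\sph^{m-1}|}{m}\bigl(Q_R+(m-k)H_0+(k-1)H_1\bigr)\Id=\lambda_R a\Id
\]
by \eqref{eq:ball-energy-identity}. Here we use that the traces of $P_\xi$, $\Id-P_\xi-Q_\xi$ and $Q_\xi$ average to $1$, $m-k$ and $k-1$ times $|\sph^{m-1}|/m$. The imbalance parts give
\[
w_0(R)\int(Y-Y_R)(\Id-P_\xi-Q_\xi)\ds+w_1(R)W=-w_0(R)Z+(w_1(R)-w_0(R))W,
\]
since $\int(Y-Y_R)\ds=0$ by \eqref{eq:volume-balance-rays}. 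In noncompact type,
\[
w_1(R)-w_0(R)=\frac{c}{\sinh^2R}\Bigl(\frac1{\cosh^2R}-1\Bigr)=-\frac{c}{\cosh^2R}=-e.
\]
In compact type,
\[
w_1(R)-w_0(R)=\frac{c}{\sin^2R}\Bigl(\frac1{\cos^2R}-1\Bigr)=\frac{c}{\cos^2R}=e.
\]
This yields \eqref{eq:K-upper-noncompact} and \eqref{eq:K-upper-compact}, and the final form follows from \eqref{eq:W-CZ}.

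The main obstacle is justifying the raywise bounds when $Y(\xi)$ falls outside the range where the tangent line is valid. In compact type $Y(\xi)\le V(\pi/4)$ automatically, since $E_\xi\subset(0,\pi/4)$ by \cref{lem:weinberger-center}. The concavity statement of \cref{lem:ray-rearrangement} holds on the whole range of $V$, so no issue arises. A second point is that the radial bound for $(G')^2$ must not carry an extra tangent term; this works because the weight $(G')^2$ vanishes on $(R,\infty)$, so $\int_{E_\xi}(G')^2J\dd r\le\int_0^R(G')^2J\dd r$ directly. The last point is the measurability of $\xi\mapsto E_\xi$, which is routine by Fubini in polar coordinates.
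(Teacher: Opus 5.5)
Your proposal is correct and follows the paper's proof step for step: you bound the radial term by $q(\xi)\le Q_R$, bound the two nonincreasing angular weights by the concave tangent line from \cref{lem:ray-rearrangement}, identify the constant part as $\lambda_R a\Id$ via \eqref{eq:isotropy-identities} and \eqref{eq:ball-energy-identity}, and regroup the imbalance part as $-w_0(R)Z+(w_1(R)-w_0(R))W$, where $w_1(R)-w_0(R)=-e$ in noncompact type and $+e$ in compact type. One small wording point: you need \eqref{eq:isotropy-identities} in its matrix form, namely that the integrals of $P_\xi$, $\Id-P_\xi-Q_\xi$ and $Q_\xi$ equal $1$, $m-k$ and $k-1$ times $\frac{|\sph^{m-1}|}{m}\Id$; a statement about their traces alone would not suffice.
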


\begin{proof}
For a ray $E_\xi$, set
\[
 q(\xi)=\int_{E_\xi}G'(r)^2J(r)\dd r.
\]
Since $G'=0$ for $r>R$,
\begin{equation}\label{eq:q-ray-upper}
  q(\xi)\le Q_R.
\end{equation}

Consider first noncompact type.  By
\cref{lem:radial-monotonicity}, both weights
\[
  \frac{G^2}{\sinh^2r}
  \quad\text{and}\quad
  \frac{G^2}{\sinh^2r\cosh^2r}
\]
are nonincreasing.  The rearrangement and concave tangent inequalities give
\begin{align}
 \int_{E_\xi}\frac{G^2}{\sinh^2r}J\dd r
 &\le H_{0,R}^-+d(Y(\xi)-Y_R),
 \label{eq:h0-ray-noncompact}\\
 \int_{E_\xi}\frac{G^2}{\sinh^2r\cosh^2r}J\dd r
 &\le H_{1,R}^-+f(Y(\xi)-Y_R),
 \label{eq:h1-ray-noncompact}
\end{align}
where
\[
 f=\frac{c}{\sinh^2R\cosh^2R}=d-e.
\]
Substitute \eqref{eq:q-ray-upper}--\eqref{eq:h1-ray-noncompact} into
\eqref{eq:K-exact}.  The constant part is
\begin{align*}
 &Q_R\int P_\xi\ds
 +H_{0,R}^-\int(\Id-P_\xi-Q_\xi)\ds
 +H_{1,R}^-\int Q_\xi\ds\\
 &\hspace{20mm}=\frac{|\sph^{m-1}|}{m}
 \left(Q_R+(m-k)H_{0,R}^-+(k-1)H_{1,R}^-\right)\Id\\
 &\hspace{20mm}=\lambda_Ra\Id
\end{align*}
by \eqref{eq:ball-energy-identity}.  The imbalance part is
\begin{align*}
 &d\int(Y-Y_R)(\Id-P_\xi-Q_\xi)\ds
 +f\int(Y-Y_R)Q_\xi\ds\\
 &\qquad=-dZ-dW+fW=-dZ-eW,
\end{align*}
where \eqref{eq:volume-balance-rays} was used.  This proves
\eqref{eq:K-upper-noncompact}.

In compact type the same argument uses the decreasing weights
\[
  \frac{G^2}{\sin^2r}
  \quad\text{and}\quad
  \frac{G^2}{\sin^2r\cos^2r}.
\]
Their endpoint values are $d$ and
\[
 f=\frac{c}{\sin^2R\cos^2R}=d+e.
\]
The constant part is again $\lambda_Ra\Id$, whereas the imbalance part is
\[
  -dZ-dW+fW=-dZ+eW.
\]
This proves \eqref{eq:K-upper-compact}.
\end{proof}

The equality case of the stiffness comparison contains the geometric
rigidity needed later.

\begin{lemma}[Rigidity of the stiffness comparison]
\label{lem:stiffness-rigidity}
Let $T_\pm$ denote the right-hand side of
\eqref{eq:K-upper-noncompact} or \eqref{eq:K-upper-compact}, respectively.
If $K=T_\pm$, then
\begin{equation}\label{eq:ray-ball-ae}
  E_\xi=(0,R)
  \quad\text{modulo }J(r)\dd r\text{-null sets for a.e. }\xi.
\end{equation}
Consequently $|\Omega\triangle B_R(o)|=0$.  If $\Omega$ has smooth
boundary, then $\Omega=B_R(o)$.
\end{lemma}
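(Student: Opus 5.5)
The plan is to read off, from $K=T_\pm$, equality in each raywise estimate used in the proof of \cref{prop:stiffness-comparison}, and then to use the strict monotonicity of one of the angular weights. Write $K=\int_{S(V)}\mathcal K(\xi)\ds(\xi)$, where $\mathcal K(\xi)$ is the inner integral in \eqref{eq:K-exact}. Write $T_\pm=\int_{S(V)}\mathcal T(\xi)\ds(\xi)$, where $\mathcal T(\xi)$ is the raywise comparator, namely
\[
\mathcal T(\xi)=Q_RP_\xi+\bigl(H_{0,R}+d(Y-Y_R)\bigr)(\Id-P_\xi-Q_\xi)+\bigl(H_{1,R}+f(Y-Y_R)\bigr)Q_\xi .
\]
The proof of the proposition shows $\mathcal K(\xi)\preceq\mathcal T(\xi)$ for a.e. $\xi$. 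Indeed, $\mathcal T(\xi)-\mathcal K(\xi)$ is a nonnegative combination of the three mutually orthogonal projections $P_\xi$, $\Id-P_\xi-Q_\xi$ and $Q_\xi$, with coefficients
\[
Q_R-q(\xi),\qquad \delta_0(\xi),\qquad \delta_1(\xi),
\]
where $\delta_0,\delta_1\ge0$ are the defects in \eqref{eq:h0-ray-noncompact}--\eqref{eq:h1-ray-noncompact} or their compact analogues. Each defect is the sum of a rearrangement defect and a tangent-line defect.

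Assume $K=T_\pm$. Taking traces gives
\[
\int_{S(V)}\Bigl[\bigl(Q_R-q\bigr)+(m-k)\delta_0+(k-1)\delta_1\Bigr]\ds=0 .
\]
Every term in the integrand is nonnegative, so all three vanish for a.e. $\xi$. I will use $\delta_1=0$ when $k>1$, and $\delta_0=0$ when $m>k$; at least one of these cases always applies because $m\ge2$. Consider the weight
\[
w_1=\frac{G^2}{\sinh^2r\cosh^2r}\quad\text{or}\quad w_1=\Bigl(\frac{G}{\sin r\cos r}\Bigr)^2 .
\]
It is strictly decreasing beyond $R$, where $G$ is constant. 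The main obstacle is to ensure strict decrease on $(0,R)$ as well, so that the equality clause of \cref{lem:ray-rearrangement} applies. For this I will revisit the proof of \cref{lem:radial-monotonicity}. There the contradiction argument gives $s<0$ strictly on $(0,R)$: if $s(r_0)=0$ at an interior point it would be a maximum, and the same computation with $\ge$ replacing $>$ still yields a strict contradiction. In noncompact type $\cosh^{-2}$ is also strictly decreasing, so $w_1$ and $w_0$ are strictly decreasing on $(0,\infty)$. In compact type I will use $w_1$ when $k>1$ and $w_0=w_1\cos^2r$ otherwise.

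Fix a.e. $\xi$ with zero defect. The rearrangement step then forces $E_\xi=(0,\tau(Y(\xi)))$ up to null sets. This uses the equality statement of \cref{lem:ray-rearrangement}, which is available when $0<Y(\xi)<V(L)$. The edge cases $Y(\xi)=0$ and $Y(\xi)=V(L)$ are handled directly: the tangent-line defect is then positive unless $Y=Y_R$, which is impossible for those extreme values. Next, a zero tangent-line defect at $y=Y(\xi)$ for the strictly concave $F$ forces $Y(\xi)=Y_R$. Strict concavity holds because $F'(y)=w(\tau(y))$ is strictly decreasing. Together these give $E_\xi=(0,R)$, which is \eqref{eq:ray-ball-ae}.

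Finally, Fubini in polar coordinates \eqref{eq:polar-volume} gives $|\Omega\triangle B_R(o)|=0$. In compact type the localization $r_o\le\pi/4$ and $R\le\pi/8$ ensure that the whole difference lies within the injectivity ball. Since $\Omega$ is open with smooth boundary, it coincides with the interior of its closure. The same holds for $B_R(o)$. Two such sets with null symmetric difference are equal, so $\Omega=B_R(o)$.
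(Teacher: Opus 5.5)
Your argument has a gap where it departs from the paper: the strict monotonicity of $w_1=(G/(\sin r\cos r))^2$ on $(0,R)$ in compact type. You need it whenever $k>1$, and unavoidably when $n=1$, since then $m-k=0$ and $\delta_0$ carries no information. You claim that rerunning the argument of \cref{lem:radial-monotonicity} with $s(r_0)=0$ still gives a strict contradiction. That holds in noncompact type, where $s(r_0)=s'(r_0)=0$ gives $0=\lambda_R+m+(k-1)(1+\sech^2 r_0)$. In compact type, however, the same two relations give exactly $\lambda_R=2(m+k)$. This is perfectly compatible with the cited bound $\lambda_R\ge 2(m+k)$, so there is no contradiction.

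The failure is real, not cosmetic. For $R=\pi/4$ the function $g=\sin r\cos r$ solves \eqref{eq:radial-ode} with $\lambda_R=2(m+k)$. Then $w_1\equiv 1$ on $(0,R)$, $\delta_1=0$ for every ray, and the angular defect gives no information at all. Since your strictness argument never uses $R<\pi/4$, it cannot be correct as stated. A repair must use $R<\pi/4$. For instance, eliminating $g'$ between $s'$ and \eqref{eq:radial-ode} gives
\[
\bigl(s\sin^m r\cos^k r\bigr)'=-\bigl(\lambda_R-2(m+k)\bigr)\,g\,\sin^m r\cos^k r .
\]
Integrate from $r=0$, where $s\sin^m r\cos^k r\to 0$. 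Since $s(R)=-(\cot R-\tan R)g(R)<0$ for $R<\pi/4$, this forces $\lambda_R>2(m+k)$ and $s<0$ on $(0,R]$. When $m>k$ you could instead use $w_0=w_1\cos^2 r$, which is strictly decreasing with no extra work.

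The simplest fix is the paper's route, and you already have the ingredient. Your trace identity gives $Q_R-q(\xi)=0$ for a.e.\ $\xi$. Since $G'=g'>0$ on $(0,R)$, this gives $(0,R)\subset E_\xi$ up to null sets, hence $Y(\xi)\ge Y_R$ a.e. The volume balance \eqref{eq:volume-balance-rays} then forces $Y(\xi)=Y_R$, and so $E_\xi=(0,R)$. This radial-deficit argument needs no strict monotonicity of the angular weights, no equality clause of \cref{lem:ray-rearrangement}, and no edge cases, and it works even at $R=\pi/4$.

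Your final step, from a null symmetric difference to $\Omega=B_R(o)$ via sets that equal the interior of their closure, is correct. It is slightly cleaner than the paper's pointwise argument.
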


\begin{proof}
The proof of \cref{prop:stiffness-comparison} writes $T_\pm-K$ as the
integral of three nonnegative scalar deficits multiplying the mutually
orthogonal positive semidefinite projections
$P_\xi$, $\Id-P_\xi-Q_\xi$, and $Q_\xi$.  In particular, the radial part is
\[
  (Q_R-q(\xi))P_\xi,
  \qquad Q_R-q(\xi)\ge0.
\]
If the integral matrix is zero, its trace is zero.  The trace integrand is
nonnegative, so $q(\xi)=Q_R$ for almost every $\xi$.  Since
$G'(r)=g'(r)>0$ on $(0,R)$, this implies
$(0,R)\subset E_\xi$ modulo null sets.  Hence $Y(\xi)\ge Y_R$ almost
everywhere.  The volume balance \eqref{eq:volume-balance-rays} forces
$Y(\xi)=Y_R$, and no radial measure can remain outside $(0,R)$.  This proves
\eqref{eq:ray-ball-ae} and then the polar-coordinate identity gives
$|\Omega\triangle B_R(o)|=0$.

For completeness, suppose first that a point of the open set $\Omega$ lies
outside the open ball $B_R(o)$.  A sufficiently small neighborhood has
positive measure outside the ball, contradicting the null symmetric
difference; the same argument applies to a point of $\Omega$ on
$\partial B_R(o)$.  Hence $\Omega\subset B_R(o)$.  Conversely, a point of
$B_R(o)\setminus\Omega$ either has a neighborhood disjoint from $\Omega$,
or lies on the smooth hypersurface $\partial\Omega$.  In both cases the
complement of $\Omega$ has positive measure in a small ball, again a
contradiction.  Thus $B_R(o)\subset\Omega$.
\end{proof}

\section{Finite-dimensional closure lemmas}\label{sec:matrix-closure}

We write $A\succeq B$ when $A-B$ is positive semidefinite.  We shall use
that inversion reverses the Loewner order on positive definite matrices and
that $\tr(AB)\ge0$ whenever $A,B\succeq0$; see
\cite{Bhatia1997}.

\subsection{The aligned trace-free lemma}

The first lemma is the scalar-aligned closure from the Euclidean argument
of He--Li--Tang.

\begin{lemma}[Aligned trace-free closure]\label{lem:aligned-closure}
Let $a,c,d,\lambda>0$ and $Z\in\Sym_m$ with $\tr Z=0$.  Suppose
\begin{equation}\label{eq:aligned-hypotheses}
 M\succeq0,
 \qquad M\succeq a\Id+cZ,
 \qquad 0<K\preceq\lambda a\Id-dZ.
\end{equation}
Then
\begin{equation}\label{eq:aligned-conclusion}
  \tr(K^{-1}M)\ge\frac m\lambda.
\end{equation}
If equality holds, then $Z=0$, $M=a\Id$, and $K=\lambda a\Id$.
\end{lemma}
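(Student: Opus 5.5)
The plan is a chain of three Loewner-order reductions that turns the matrix problem into a scalar inequality for the eigenvalues of $Z$. That scalar inequality is then closed by convexity and the trace-free condition.

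\emph{Step 1: reduce $K$.} Put $T:=\lambda a\Id-dZ$. By hypothesis $T\succeq K\succ0$, so $T$ is positive definite. Since inversion reverses the Loewner order, $K^{-1}\succeq T^{-1}$. Because $M\succeq0$, we get $\tr\bigl((K^{-1}-T^{-1})M\bigr)\ge0$, that is,
\[
\tr(K^{-1}M)\ge\tr(T^{-1}M).
\]

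\emph{Step 2: reduce $M$.} Since $T^{-1}\succ0$ and $M-(a\Id+cZ)\succeq0$, the same trace positivity gives
\[
\tr(T^{-1}M)\ge\tr\bigl(T^{-1}(a\Id+cZ)\bigr).
\]
Here $T^{-1}$ and $a\Id+cZ$ are both functions of $Z$, so they are simultaneously diagonalizable. Let $z_1,\dots,z_m$ be the eigenvalues of $Z$. Then $\sum_i z_i=0$, and positivity of $T$ gives $\lambda a-dz_i>0$ for every $i$. The right-hand side becomes $\sum_i\varphi(z_i)$, where
\[
\varphi(z)=\frac{a+cz}{\lambda a-dz},\qquad z<\frac{\lambda a}{d}.
\]

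\emph{Step 3: scalar convexity.} A direct computation gives
\[
\varphi'(z)=\frac{a(c\lambda+d)}{(\lambda a-dz)^2}>0,
\qquad
\varphi''(z)=\frac{2ad(c\lambda+d)}{(\lambda a-dz)^3}>0
\]
on the admissible interval. So $\varphi$ is strictly convex there, and it lies above its tangent line at $0$:
\[
\varphi(z)\ge\frac1\lambda+\varphi'(0)\,z .
\]
Summing over $i$ and using $\sum_i z_i=0$ gives $\sum_i\varphi(z_i)\ge m/\lambda$. This proves \eqref{eq:aligned-conclusion}.

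\emph{Equality.} If equality holds in \eqref{eq:aligned-conclusion}, then all three inequalities above are equalities.
\begin{enumerate}[label=\textup{(\roman*)},leftmargin=2.2em]
\item Strict convexity of $\varphi$ makes the tangent inequality strict unless $z=0$. Hence every $z_i=0$, so $Z=0$ and $T=\lambda a\Id$.
\item Equality in Step 2 now reads $\tr\bigl((\lambda a)^{-1}(M-a\Id)\bigr)=0$. Since $M-a\Id\succeq0$ has zero trace, it vanishes, so $M=a\Id$.
\item Equality in Step 1 then reads $a\,\tr(K^{-1}-T^{-1})=0$ with $K^{-1}-T^{-1}\succeq0$. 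This forces $K^{-1}=T^{-1}$, so $K=\lambda a\Id$.
\end{enumerate}

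The only delicate point is Step 3. One must check that the eigenvalues stay in the interval $z<\lambda a/d$ where $\varphi$ is convex, and this is exactly what positivity of $T$ (inherited from $K\succ0$) supplies. Note that Step 2 never requires $a\Id+cZ$ to be positive, only $T^{-1}\succ0$.
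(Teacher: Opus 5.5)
Your proposal is correct and follows essentially the same route as the paper. The paper uses the same two Loewner-order reductions and the same diagonalization of $Z$, and it closes with Jensen applied to $z\mapsto z/(\lambda a-dz)$ after subtracting $1/\lambda$, which is equivalent to your tangent-line argument for $\varphi$. Its equality analysis proceeds in the same order: $Z=0$, then $M=a\Id$, then $K=\lambda a\Id$.
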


\begin{proof}
Put $T=\lambda a\Id-dZ$.  From $0<K\preceq T$ and $M\succeq0$,
\[
  \tr(K^{-1}M)\ge\tr(T^{-1}M)
  \ge\tr\bigl(T^{-1}(a\Id+cZ)\bigr).
\]
Diagonalize $Z$, with eigenvalues $z_1,\ldots,z_m$.  The last trace is
\begin{align*}
 \sum_{i=1}^{m}\frac{a+cz_i}{\lambda a-dz_i}
 &=\frac m\lambda+
 \frac{d+\lambda c}{\lambda}
 \sum_{i=1}^{m}\frac{z_i}{\lambda a-dz_i}.
\end{align*}
The function $z\mapsto z/(\lambda a-dz)$ is strictly convex on the
positivity interval, and $\sum_i z_i=0$.  Jensen's inequality proves
\eqref{eq:aligned-conclusion}.

If equality holds, strict convexity gives $z_i=0$ for every $i$, so
$Z=0$ and $T=\lambda a\Id$.  Equality in the second trace comparison and
$T^{-1}>0$ imply $M=a\Id$.  Since $M$ is then positive definite, equality
in the first comparison implies $K^{-1}=T^{-1}$ and hence
$K=\lambda a\Id$.
\end{proof}

\subsection{A Jensen lemma with a positive projection}

The noncompact comparator contains both $X$ and $\Pi X$.  The next lemma is
the main new finite-dimensional ingredient.

\begin{lemma}[Positive-projection Jensen inequality]
\label{lem:positive-projection-jensen}
Let $\Pi:\Sym_m\to\Sym_m$ be positive, unital, trace-preserving,
Hilbert--Schmidt self-adjoint, and idempotent, and assume its
complex-linear extension is positive.  Let $X\in\Sym_m$ satisfy
$\tr X=0$.  If $u,v>0$ and
\begin{equation}\label{eq:projection-T}
  T=\Id-uX-v\Pi X\succ0,
\end{equation}
then
\begin{equation}\label{eq:positive-projection-jensen}
  \tr\bigl(T^{-1}(\Id+X)\bigr)\ge m.
\end{equation}
Equality holds if and only if $X=0$.
\end{lemma}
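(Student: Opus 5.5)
The plan is to reparametrize by $Y:=uX+v\Pi X$, so that $T=\Id-Y$, and then reduce \eqref{eq:positive-projection-jensen} to two scalar-type inequalities: one for $\tr T^{-1}$ and one obtained from Choi's (operator Jensen) inequality applied to the positive unital map $\Pi$. Because $\tr X=0$ and $\Pi$ is trace-preserving, $\tr Y=0$ and $\tr T=m$.

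First I invert the relation between $X$ and $Y$ using idempotence. Write $X=X_1+X_0$ with $X_1=\Pi X$ and $X_0=(\Id-\Pi)X$. Then $Y=(u+v)X_1+uX_0$. Hence $\Pi Y=(u+v)X_1$, $(\Id-\Pi)Y=uX_0$, and
\[
 X=\frac1u\,Y-\frac{v}{u(u+v)}\,\Pi Y=:\alpha Y-\beta\,\Pi Y,
 \qquad \alpha=\frac1u>\beta=\frac{v}{u(u+v)}>0 .
\]
Substituting this and using $Y=\Id-T$ and $\Pi Y=\Id-\Pi T$ (unitality) gives $\tr(T^{-1}Y)=\tr T^{-1}-m$. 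Hilbert--Schmidt self-adjointness and idempotence of $\Pi$ give
\[
 \tr(T^{-1}\Pi T)=\tr\bigl(\Pi(T^{-1})\,T\bigr)=\tr\bigl(\Pi(T^{-1})\,\Pi(T)\bigr).
\]
Therefore
\[
 \tr\bigl(T^{-1}(\Id+X)\bigr)=(1+\alpha-\beta)\tr T^{-1}-\alpha m+\beta\,\tr\bigl(\Pi(T^{-1})\Pi(T)\bigr).
\]

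Second, I bound the two traces. Since $T\succ0$ and $\tr T=m$, scalar convexity of $t\mapsto1/t$ on the eigenvalues gives $\tr T^{-1}\ge m$. For the other term, positivity of $\Pi$ gives $\Pi(T)\succ0$: indeed $T\succeq\epsilon\Id$ implies $\Pi(T)\succeq\epsilon\Id$. Because the complex extension of $\Pi$ is positive and unital, Choi's inequality for the operator convex function $t\mapsto t^{-1}$ yields $\Pi(T^{-1})\succeq\Pi(T)^{-1}$. Multiplying by $\Pi(T)^{1/2}$ on both sides and taking traces gives $\tr\bigl(\Pi(T^{-1})\Pi(T)\bigr)\ge m$. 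Since $1+\alpha-\beta>0$ and $\beta>0$, the identity above yields
\[
 \tr\bigl(T^{-1}(\Id+X)\bigr)\ge(1+\alpha-\beta)m-\alpha m+\beta m=m .
\]

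For equality, equality in the scalar Jensen step forces all eigenvalues of $T$ to equal $1$, so $T=\Id$. Then $Y=0$, and the inversion formula gives $X=0$; the converse is trivial. The main obstacle is justifying Choi's inequality in this setting: one needs positivity of the complex-linear extension, not merely of $\Pi$ on real symmetric matrices. This is exactly what \cref{thm:hopf-projection}(iii) supplies. Everything else is the algebraic inversion of $X\mapsto uX+v\Pi X$, which depends on $\Pi^2=\Pi$.
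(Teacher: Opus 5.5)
Your proof is correct. Its skeleton matches the paper's: the same substitution $Y=uX+v\Pi X$, the same inversion $X=\frac1uY-\frac{v}{u(u+v)}\Pi Y$, and in effect the same two inequalities. Your deficits $\tr T^{-1}-m$ and $\tr\bigl(\Pi(T^{-1})\Pi(T)\bigr)-m$ are exactly the paper's $H=\tr\bigl((\Id-Y)^{-1}Y\bigr)$ and $H-D$, where $D=\tr\bigl((\Id-Y)^{-1}\Pi Y\bigr)$. Your final combination is the paper's bound $(1+\frac1u)H-\frac{v}{u(u+v)}D\ge0$ in different coordinates.

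The real difference is how the key comparison $D\le H$, i.e. $\tr(T^{-1}\Pi T)\ge m$, is proved.

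\emph{The paper's route.} It applies the gradient inequality for the convex function $-\log\det(\Id-A)$ between $Y$ and $\Pi Y$. It then uses the Davis--Choi--Jensen inequality for the operator convex function $-\log$, together with trace preservation, to get $\det\Pi(T)\ge\det T$. That step needs only positivity, unitality and trace preservation of $\Pi$; idempotence enters only through the inversion $X\leftrightarrow Y$.

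\emph{Your route.} You use Hilbert--Schmidt self-adjointness and idempotence to symmetrize $\tr(T^{-1}\Pi T)=\tr\bigl(\Pi(T^{-1})\Pi(T)\bigr)$. The basic Choi inequality $\Pi(T^{-1})\succeq\Pi(T)^{-1}$ then finishes in one line, with no log-determinant or operator logarithm.

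So your argument is shorter and more elementary. The paper's proves the key comparison under weaker structural hypotheses on $\Pi$. Both are fully adequate here, since the Hopf projection has every listed property.

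One small correction to your closing remark: positivity of the complex extension is not really the crux. Because $T$ is real symmetric, write $T=\sum_i t_iE_i$ with real spectral projections $E_i$. Then
$\bigl(\begin{smallmatrix}\Pi(T)&\Id\\ \Id&\Pi(T^{-1})\end{smallmatrix}\bigr)=\sum_i\bigl(\begin{smallmatrix}t_i&1\\ 1&t_i^{-1}\end{smallmatrix}\bigr)\otimes\Pi(E_i)\succeq0$.
The Schur complement then gives your Choi inequality using only positivity of $\Pi$ on real positive semidefinite matrices.
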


\begin{proof}
Set
\[
  Y=uX+v\Pi X,
  \qquad T=\Id-Y.
\]
Since $\Pi^2=\Pi$,
\[
  \Pi Y=(u+v)\Pi X,
  \qquad
  X=\frac1uY-\frac{v}{u(u+v)}\Pi Y.
\]
Define
\[
 H=\tr\bigl((\Id-Y)^{-1}Y\bigr),
 \qquad
 D=\tr\bigl((\Id-Y)^{-1}\Pi Y\bigr).
\]
Trace preservation and $\tr X=0$ give $\tr Y=0$.  If
$y_1,\ldots,y_m<1$ are the eigenvalues of $Y$, then strict convexity of
$t\mapsto t/(1-t)$ gives
\begin{equation}\label{eq:H-nonnegative}
  H=\sum_{i=1}^{m}\frac{y_i}{1-y_i}\ge0,
  \qquad H=0\Longleftrightarrow Y=0.
\end{equation}

We claim that $D\le H$.  On the open convex set $\Id-A\succ0$, let
\[
  \Phi(A)=-\log\det(\Id-A).
\]
Convexity and the derivative
$D\Phi(Y)[B]=\tr((\Id-Y)^{-1}B)$ give
\begin{equation}\label{eq:Phi-gradient}
 \tr\bigl((\Id-Y)^{-1}(Y-\Pi Y)\bigr)
 \ge \Phi(Y)-\Phi(\Pi Y).
\end{equation}
Since $\Pi$ is positive and unital,
$\Id-\Pi Y=\Pi(\Id-Y)\succ0$.  The function $t\mapsto-\log t$ is
operator convex.  The Davis--Choi--Jensen inequality
\cite{HansenPedersen2003} therefore yields
\[
 -\log(\Id-\Pi Y)
 \preceq\Pi\bigl(-\log(\Id-Y)\bigr).
\]
Taking traces and using trace preservation gives
$\Phi(\Pi Y)\le\Phi(Y)$.  The left-hand side of
\eqref{eq:Phi-gradient} is $H-D$, proving the claim.

Finally,
\begin{align*}
 \tr\bigl(T^{-1}(\Id+X)\bigr)-m
 &=\tr\bigl((\Id-Y)^{-1}(X+Y)\bigr)\\
 &=\left(1+\frac1u\right)H-
   \frac{v}{u(u+v)}D\\
 &\ge\left(1+\frac1{u+v}\right)H\ge0.
\end{align*}
This proves \eqref{eq:positive-projection-jensen}.  Equality forces $H=0$
and hence $Y=0$.  The linear operator $u\Id+v\Pi$ acts by $u+v$ on
$\Ran\Pi$ and by $u$ on $\Ker\Pi$, so it is invertible; consequently
$X=0$.  The converse is immediate.
\end{proof}

\subsection{A positive-mass compact closure}

The Hopf term has the opposite sign in compact type.  The lower comparator
$\Id+X$ can then be indefinite, and the independent positivity of the true
mass matrix becomes essential.

\begin{lemma}[Positive-mass closure]\label{lem:positive-mass-closure}
Let $X\in\Sym_m$ satisfy $\tr X=0$ and put $A=\Id+X$.  Suppose
$\widehat M,T\in\Sym_m$ satisfy
\begin{equation}\label{eq:positive-mass-hypotheses}
 \widehat M\succeq0,
 \qquad \widehat M\succeq A,
 \qquad 0<T\preceq2\Id,
 \qquad \tr(TA)\le m.
\end{equation}
Then
\begin{equation}\label{eq:positive-mass-conclusion}
  \tr(T^{-1}\widehat M)\ge m.
\end{equation}
\end{lemma}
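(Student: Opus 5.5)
The whole argument rests on one operator inequality, $T^{-1}\succeq 2\Id-T$. Once we have it, the two hypotheses on $\widehat M$ are used as two successive trace comparisons against positive semidefinite weights.

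First, the scalar inequality $1/t\ge 2-t$ holds for every $t>0$, because it is equivalent to $(t-1)^2\ge0$. Since $T\succ0$, the spectral theorem gives
\[
  T^{-1}-(2\Id-T)\succeq0 .
\]
The hypothesis $T\preceq2\Id$ says exactly that $2\Id-T\succeq0$. So $T^{-1}$ dominates a positive semidefinite matrix that is affine in $T$.

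Second, we compare traces using the fact that $\tr(AB)\ge0$ whenever $A,B\succeq0$. We take the difference matrix $T^{-1}-(2\Id-T)\succeq0$ against $\widehat M\succeq0$, and then the weight $2\Id-T\succeq0$ against $\widehat M-A\succeq0$. This gives
\[
  \tr(T^{-1}\widehat M)\ge\tr\bigl((2\Id-T)\widehat M\bigr)
  \ge\tr\bigl((2\Id-T)A\bigr).
\]
The first inequality is where the separate positivity $\widehat M\succeq0$ is essential. We cannot pair $T^{-1}$ directly with the possibly indefinite $A$, and this is why the lemma keeps both mass hypotheses.

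Finally, $\tr X=0$ gives $\tr A=m$. Therefore
\[
  \tr\bigl((2\Id-T)A\bigr)=2m-\tr(TA)\ge m
\]
by the hypothesis $\tr(TA)\le m$, which proves \eqref{eq:positive-mass-conclusion}.

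The only nonroutine step is spotting the linearization $T^{-1}\succeq2\Id-T$. It is the tangent-line bound for $t\mapsto 1/t$ at $t=1$. It fits the hypotheses exactly: the upper bound $T\preceq 2\Id$ makes the weight $2\Id-T$ positive semidefinite, and the trace constraint on $TA$ is what closes the estimate.
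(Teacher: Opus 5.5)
Your proof is correct and is the paper's argument without the congruence by $T^{-1/2}$. The paper sets $N=T^{-1/2}\widehat M T^{-1/2}$ and $P=2T-T^2=T^{1/2}(2\Id-T)T^{1/2}$. Its chain $\tr N\ge\tr(PN)\ge\tr(PB)$ is then exactly your $\tr(T^{-1}\widehat M)\ge\tr\bigl((2\Id-T)\widehat M\bigr)\ge\tr\bigl((2\Id-T)A\bigr)$, with $\Id-P=(\Id-T)^2\succeq0$ playing the role of your tangent-line bound $T^{-1}-(2\Id-T)=T^{-1}(\Id-T)^2\succeq0$.
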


\begin{proof}
Set
\[
 N=T^{-1/2}\widehat M T^{-1/2},
 \qquad B=T^{-1/2}AT^{-1/2}.
\]
Then $N\succeq0$ and $N\succeq B$.  Because $0<T\preceq2\Id$, the matrix
$P=2T-T^2$ satisfies $0\preceq P\preceq\Id$.  Therefore
\[
 \tr N\ge\tr(PN)\ge\tr(PB).
\]
The second inequality follows from $P\succeq0$ and $N-B\succeq0$.  Since
$P$ commutes with $T$, cyclicity of the trace gives
\[
 \tr(PB)=\tr\bigl((2\Id-T)A\bigr)
 =2\tr A-\tr(TA)\ge2m-m=m.
\]
Since $\tr N=\tr(T^{-1}\widehat M)$, the proof is complete.
\end{proof}

\section{Proof in noncompact type}\label{sec:noncompact-proof}

\begin{proof}[Proof of \cref{thm:main-noncompact}]
By \cref{lem:trial-admissible,lem:trace-ritz},
\begin{equation}\label{eq:noncompact-ritz-start}
  \sum_{j=1}^{m}\frac1{\mu_j(\Omega)}
  \ge\tr(K^{-1}M).
\end{equation}
Let
\begin{equation}\label{eq:T0-noncompact}
  T_0=\lambda_Ra\Id-dZ-e\cC(Z).
\end{equation}
By \eqref{eq:K-upper-noncompact}, $0<K\preceq T_0$, and hence $T_0\succ0$.
Since inversion reverses Loewner order and $M\succeq0$,
\[
  \tr(K^{-1}M)\ge\tr(T_0^{-1}M).
\]
The mass comparison \eqref{eq:M-lower} and $T_0^{-1}\succ0$ then give
\begin{equation}\label{eq:noncompact-two-order-steps}
  \tr(K^{-1}M)
  \ge\tr\bigl(T_0^{-1}(a\Id+cZ)\bigr).
\end{equation}

Use the normalized matrix $X=cZ/a$ and set
\begin{equation}\label{eq:alpha-beta-noncompact}
  \alpha=\frac{d}{\lambda_Rc}
  =\frac1{\lambda_R\sinh^2R},
  \qquad
  \beta=\frac{e}{\lambda_Rc}
  =\frac1{\lambda_R\cosh^2R}.
\end{equation}
Because $\cC=k\Pi-\Id$,
\begin{align*}
 T_0
 &=\lambda_Ra\bigl(\Id-\alpha X-\beta\cC(X)\bigr)\\
 &=\lambda_Ra\bigl(\Id-(\alpha-\beta)X-k\beta\Pi X\bigr).
\end{align*}
Here $u:=\alpha-\beta>0$ and $v:=k\beta>0$.  Therefore
\cref{lem:positive-projection-jensen} yields
\begin{align*}
 \tr\bigl(T_0^{-1}(a\Id+cZ)\bigr)
 &=\frac1{\lambda_R}
 \tr\left[(\Id-uX-v\Pi X)^{-1}(\Id+X)\right]\\
 &\ge\frac m{\lambda_R}.
\end{align*}
Together with \eqref{eq:noncompact-ritz-start} and
\eqref{eq:noncompact-two-order-steps}, this proves
\eqref{eq:main-noncompact}.

Suppose equality holds.  Then equality must hold at every stage of the
preceding chain.  The equality statement in
\cref{lem:positive-projection-jensen} gives $X=0$, hence $Z=0$ and
$T_0=\lambda_Ra\Id$.  Equality in the mass trace comparison gives
$M=a\Id$.  Since $M\succ0$, equality in the inverse-order comparison gives
$K=T_0$.  Thus equality holds in the stiffness comparison, and
\cref{lem:stiffness-rigidity} implies $\Omega=B_R(o)$.  Conversely, on a
geodesic ball the first nonzero eigenvalue has multiplicity $m$, and the
claimed equality is immediate.
\end{proof}

\section{Proofs in compact type}\label{sec:compact-proof}

\subsection{The constant-curvature endpoints}

\begin{proposition}\label{prop:compact-endpoints}
Under \cref{ass:compact-localization}, the estimate
\eqref{eq:main-compact} holds when $k=1$ or $n=1$, and equality
characterizes geodesic balls.
\end{proposition}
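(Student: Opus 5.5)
We follow the noncompact proof verbatim up to the finite-dimensional closure and then reduce the compact comparator to the aligned form of \cref{lem:aligned-closure}. By \cref{lem:trial-admissible,lem:trace-ritz} we have $\sum_{j\le m}\mu_j(\Omega)^{-1}\ge\tr(K^{-1}M)$. \Cref{lem:weinberger-center} places the center so that every polar radius is at most $\pi/4$, and $R\le\pi/8$. Hence \cref{lem:radial-monotonicity} and \cref{prop:stiffness-comparison} apply. They give $M\succeq a\Id+cZ$ and $0<K\preceq T:=\lambda_Ra\Id-dZ+eW$, with $W=\cC(Z)$ and $d=c/\sin^2R$, $e=c/\cos^2R$.

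\emph{Case $k=1$.} Here $Q_\xi=0$, so $W=0$ by \cref{rem:constant-curvature-endpoints}. The comparator is $T=\lambda_Ra\Id-dZ$ with $\tr Z=0$. \Cref{lem:aligned-closure}, applied with $\lambda=\lambda_R$ and the true mass matrix $M\succeq0$, gives $\tr(K^{-1}M)\ge m/\lambda_R$ directly.

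\emph{Case $n=1$.} Now $m=k$ and $Q_\xi=\Id-P_\xi$. The weight $Y(\xi)-Y_R$ has zero spherical mean by \eqref{eq:volume-balance-rays}, so \eqref{eq:n-one-W-minus-Z} yields $W=-Z$. This is consistent with $\cC$ acting as $-\Id$ on trace-free matrices; in this case $\Pi$ projects onto multiples of $\Id$. Therefore $T=\lambda_Ra\Id-(d+e)Z$, with $d+e=c/(\sin^2R\cos^2R)>0$. Again \cref{lem:aligned-closure}, now with $d$ replaced by $d+e$, gives the estimate. In both cases the order-reversal steps are the ones used in \cref{sec:noncompact-proof}, namely $\tr(K^{-1}M)\ge\tr(T^{-1}M)\ge\tr(T^{-1}(a\Id+cZ))$. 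They require only $T\succeq K\succ0$, $M\succeq0$ and $T^{-1}\succ0$.

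\emph{Equality.} The equality clause of \cref{lem:aligned-closure} forces $Z=0$, $M=a\Id$ and $K=\lambda_Ra\Id$. Since $W=0$ or $W=-Z$, it follows that $W=0$, so the comparator $T_\pm$ equals $\lambda_Ra\Id=K$. \Cref{lem:stiffness-rigidity} then gives $\Omega=B_R(o)$. The converse holds because $\mu_1(B_R)$ has multiplicity $m$.

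\emph{Main obstacle.} The only nonroutine point is confirming the identity $W=-Z$ for $n=1$, which is exactly \eqref{eq:n-one-W-minus-Z}, together with the positivity of the effective coefficient $d+e$. Both are immediate from the earlier remark and the definitions. The substantive input, the monotonicity of $G^2/\sin^2 r$ and $G^2/(\sin^2r\cos^2r)$ on $(0,\pi/4)$, is already supplied by \cref{lem:radial-monotonicity} under the localization hypothesis.
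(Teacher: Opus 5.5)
Your proposal is correct and is essentially the paper's own proof. In both, \cref{lem:aligned-closure} closes the argument: for $k=1$ because $W=0$, and for $n=1$ because $W=-Z$, which turns the comparator into $\lambda_Ra\Id-(d+e)Z$ with $d+e>0$. Rigidity then comes from the equality clause of that lemma, which gives $K=T_+$, combined with \cref{lem:stiffness-rigidity}.

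Your side remark that $\Pi$ projects onto multiples of $\Id$ when $n=1$ is also correct, since it follows from $\Pi(P_\xi)=\Id/m$, but the argument does not need it.
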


\begin{proof}
If $k=1$, then $Q_\xi=0$ and $\cC=0$, so
\eqref{eq:K-upper-compact} reduces to
\[
  K\preceq\lambda_Ra\Id-dZ.
\]
Together with \eqref{eq:M-lower}, the hypotheses of
\cref{lem:aligned-closure} are satisfied.

If $n=1$, then $Q_\xi=\Id-P_\xi$.  Equations
\eqref{eq:volume-balance-rays} and \eqref{eq:n-one-W-minus-Z} give
$W=-Z$, and hence
\[
  K\preceq\lambda_Ra\Id-(d+e)Z.
\]
Again \cref{lem:aligned-closure} applies.  In either case,
\cref{lem:trace-ritz} gives \eqref{eq:main-compact}.  If equality holds,
the equality part of the matrix lemma yields equality in the stiffness
comparison; \cref{lem:stiffness-rigidity} then gives
$\Omega=B_R(o)$.  The converse is immediate.
\end{proof}

\subsection{The favorable projective range}

\begin{proof}[Proof of \cref{thm:main-compact}]
By \cref{prop:compact-endpoints}, only the case $1<k<m$ remains.  Assume
\begin{equation}\label{eq:compact-radius-assumption-proof}
  \cot^2R\ge k-1.
\end{equation}
Normalize the mass matrix and the stiffness comparator by
\begin{equation}\label{eq:compact-normalization}
 \widehat M=\frac1aM,
 \qquad X=\frac caZ,
 \qquad
 \alpha=\frac{d}{\lambda_Rc}=\frac1{\lambda_R\sin^2R},
 \qquad
 \beta=\frac{e}{\lambda_Rc}=\frac1{\lambda_R\cos^2R}.
\end{equation}
Then
\begin{equation}\label{eq:Mhat-lower}
  \widehat M\succeq0,
  \qquad \widehat M\succeq\Id+X,
\end{equation}
and
\begin{equation}\label{eq:compact-T-def}
  0<K\preceq\lambda_RaT,
  \qquad
  T=\Id-\alpha X+\beta\cC(X)\succ0.
\end{equation}
The radius condition is equivalent to
\begin{equation}\label{eq:alpha-ge-kminus1-beta}
  \alpha\ge(k-1)\beta.
\end{equation}

We verify the remaining hypotheses of
\cref{lem:positive-mass-closure}.  From \eqref{eq:X-order-bound} and
positivity of $\cC$,
\[
  -\alpha X\preceq\alpha\rho\Id,
  \qquad
  \beta\cC(X)\preceq
  \beta(m-1)(k-1)\rho\Id.
\]
Therefore
\begin{equation}\label{eq:T-upper-before-two}
 T\preceq
 \left[1+\rho\bigl(\alpha+(m-1)(k-1)\beta\bigr)\right]\Id.
\end{equation}
The compact angular weights are nonincreasing, so their integrals over
$(0,R)$ are bounded below by their endpoint values times $Y_R$.  The ball
identity \eqref{eq:ball-energy-identity} consequently gives
\begin{align*}
 \lambda_RA_R
 &\ge Y_R\bigl((m-k)d+(k-1)(d+e)\bigr)\\
 &=Y_R\bigl((m-1)d+(k-1)e\bigr).
\end{align*}
After division by $\lambda_RA_R$, this becomes
\begin{equation}\label{eq:rho-bound-compact}
  \rho\bigl((m-1)\alpha+(k-1)\beta\bigr)\le1.
\end{equation}
Moreover, \eqref{eq:alpha-ge-kminus1-beta} implies
\[
 \alpha+(m-1)(k-1)\beta
 \le(m-1)\alpha+(k-1)\beta.
\]
Combining this with \eqref{eq:T-upper-before-two} and
\eqref{eq:rho-bound-compact} gives
\begin{equation}\label{eq:T-upper-two}
  0<T\preceq2\Id.
\end{equation}

Next, \eqref{eq:Z-trace-free}, \eqref{eq:C-trace}, and
\eqref{eq:C-quadratic-bound} imply
\begin{align}
 \tr\bigl(T(\Id+X)\bigr)
 &=m-\alpha\tr(X^2)+\beta\tr\bigl(X\cC(X)\bigr)\notag\\
 &\le m-\bigl(\alpha-(k-1)\beta\bigr)\tr(X^2)\notag\\
 &\le m.
 \label{eq:TA-compact}
\end{align}
Thus \cref{lem:positive-mass-closure}, applied to
\eqref{eq:Mhat-lower}, \eqref{eq:T-upper-two}, and
\eqref{eq:TA-compact}, yields
\[
  \tr(T^{-1}\widehat M)\ge m.
\]
By \eqref{eq:compact-T-def} and the inverse Loewner order,
\[
 \tr(K^{-1}M)
 \ge\frac1{\lambda_R}\tr(T^{-1}\widehat M)
 \ge\frac m{\lambda_R}.
\]
The trace Ritz principle proves \eqref{eq:main-compact}.

If equality holds in the spectral inequality, all terms in the chain
\[
 \sum_{j=1}^{m}\frac1{\mu_j(\Omega)}
 \ge\tr(K^{-1}M)
 \ge\frac1{\lambda_R}\tr(T^{-1}\widehat M)
 \ge\frac m{\lambda_R}
\]
are equal.  Since $M\succ0$, equality in the inverse-order comparison
forces $K=\lambda_RaT$, namely equality in the compact stiffness
comparison.  By \cref{lem:stiffness-rigidity}, $\Omega=B_R(o)$.  The
converse is immediate.
\end{proof}

\begin{proof}[Proof of \cref{cor:compact-models}]
Under \cref{ass:compact-localization}, $R\le\pi/8$.  Since
\[
  \cot^2(\pi/8)=3+2\sqrt2>3,
\]
condition \eqref{eq:radius-condition} is automatic for $k=2$ and $k=4$.
For $k=8$ it is equivalent to $\cot^2R\ge7$, or
$R\le\arctan(1/\sqrt7)$.  Apply \cref{thm:main-compact}.
\end{proof}

\subsection{Hopf-balanced angular moment}

\begin{definition}\label{def:hopf-balanced}
The Weinberger center $o$ is called \emph{Hopf-balanced} if the angular
imbalance matrix in \eqref{eq:Z-def} satisfies
\begin{equation}\label{eq:hopf-balanced}
  \Pi Z=0.
\end{equation}
\end{definition}

\begin{proof}[Proof of \cref{thm:hopf-balanced-intro}]
If $\Pi Z=0$, then $\cC(Z)=-Z$.  The compact stiffness estimate becomes
\[
  K\preceq\lambda_Ra\Id-(d+e)Z.
\]
Together with \eqref{eq:M-lower}, this is exactly the setting of
\cref{lem:aligned-closure}.  The trace Ritz principle proves the full sum.
Equality is rigid by \cref{lem:stiffness-rigidity}.
\end{proof}

\section{The compact affine obstruction}\label{sec:obstruction}

The condition $\cot^2R\ge k-1$ is not merely an artifact of the estimate
$T\preceq2\Id$.  When it fails, the two affine comparators themselves no
longer imply the desired trace inequality.

\begin{proposition}[Finite-dimensional obstruction]
\label{prop:abstract-obstruction}
Assume $1<k<m$ and
\begin{equation}\label{eq:bad-radius}
  \cot^2R<k-1.
\end{equation}
Let
\begin{equation}\label{eq:obstruction-alpha-beta}
  \alpha=\frac1{\lambda_R\sin^2R},
  \qquad
  \beta=\frac1{\lambda_R\cos^2R}.
\end{equation}
Then there exists a nonzero trace-free $X\in\Ran\Pi$ such that
$\Id+X\succ0$ and
\[
  T=\Id-\alpha X+\beta\cC(X)\succ0,
\]
but
\begin{equation}\label{eq:obstruction-trace}
  \tr\bigl(T^{-1}(\Id+X)\bigr)<m.
\end{equation}
Consequently, the abstract implications
\[
  \widehat M\succeq0,
  \quad \widehat M\succeq\Id+X,
  \quad 0<K\preceq T
  \quad\Longrightarrow\quad
  \tr(K^{-1}\widehat M)\ge m
\]
are false in the range \eqref{eq:bad-radius}.
\end{proposition}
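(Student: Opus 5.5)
Take $X=tX_1$ with $X_1\in\Ran\Pi$ nonzero, trace-free and symmetric, and $t>0$ small; then expand the trace to second order in $t$.

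\emph{Existence of $X_1$.} By \cref{thm:hopf-projection}, $\Ran\Pi$ contains $\Id$ and $\Pi(P_\xi)=\frac1kP_{\K\xi}$ for every unit vector $\xi$. Since $1<k<m$, the projection $P_{\K\xi}$ is not a multiple of $\Id$, so
\[
X_1=P_{\K\xi}-\tfrac km\Id
\]
is a nonzero trace-free element of $\Ran\Pi$.

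\emph{Reduction to one variable.} On $\Ran\Pi$ we have $\cC=(k-1)\Id$ by \eqref{eq:C-spectral-splitting}. Hence
\[
T=\Id-\gamma X,\qquad \gamma:=\alpha-(k-1)\beta .
\]
The hypothesis \eqref{eq:bad-radius} gives $\gamma<0$; this is exactly the inequality $\alpha<(k-1)\beta$, i.e.\ the reverse of \eqref{eq:alpha-ge-kminus1-beta}. For $t$ small, both $\Id+X$ and $T$ are positive definite. Diagonalize $X_1$ with eigenvalues $x_i$ satisfying $\sum x_i=0$. Then
\[
\tr\bigl(T^{-1}(\Id+X)\bigr)=\sum_i\frac{1+tx_i}{1-\gamma tx_i}.
\]

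\emph{Second-order expansion.} Expanding each summand,
\[
\frac{1+tx_i}{1-\gamma tx_i}=1+(1+\gamma)tx_i+\gamma(1+\gamma)t^2x_i^2+O(t^3).
\]
The linear term vanishes after summation because $X_1$ is trace-free. The sum therefore equals
\[
m+\gamma(1+\gamma)\,t^2\tr(X_1^2)+O(t^3).
\]
So \eqref{eq:obstruction-trace} holds for small $t$ precisely when $\gamma(1+\gamma)<0$, that is, when $-1<\gamma<0$.

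\emph{The main obstacle: showing $\gamma>-1$.} Equivalently, we need $(k-1)\beta-\alpha<1$, i.e.
\[
\lambda_R>\frac{k-1}{\cos^2R}-\frac1{\sin^2R}.
\]
I would use the ball estimate $\lambda_R\ge2(m+k)$ quoted in \cref{lem:radial-monotonicity}. The right-hand side is smaller than $(k-1)\sec^2R\le2(k-1)$ for $R\le\pi/4$, and $2(k-1)<2(m+k)$. This settles the second-order case.

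\emph{Fallback if $\gamma\le-1$.} Should $\gamma\le-1$ arise (it cannot, by the estimate above), the second-order argument fails. I would then instead take $t$ of the size at which $1-\gamma tx_i$ is small for the negative eigenvalue, and analyze the scalar function
\[
s\mapsto\frac{1+s}{1-\gamma s}
\]
exactly. Its derivative in $s$ is $(1+\gamma)/(1-\gamma s)^2$, and for $\gamma<0$ its second derivative $2\gamma(1+\gamma)/(1-\gamma s)^3$ makes the function strictly concave on its domain when $-1<\gamma<0$. Jensen's inequality with $\sum x_i=0$ then gives the strict inequality for every admissible $t>0$, not only small $t$.

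\emph{The consequence for the abstract implication.} Take $\widehat M=\Id+X\succ0$ and $K=T$. All the hypotheses of the implication hold, yet its conclusion fails by \eqref{eq:obstruction-trace}.
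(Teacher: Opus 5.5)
Your proof is correct and is essentially the paper's argument. On $\Ran\Pi$ one has $\cC=(k-1)\Id$, so $T=\Id-\gamma X$ with $\gamma=\alpha-(k-1)\beta\in(-1,0)$; this is the paper's $s=-\gamma$, and it is bounded via $\lambda_R\ge2(m+k)$ and $R\le\pi/4$ exactly as you do. Strict concavity of $t\mapsto(1+t)/(1-\gamma t)$ together with $\tr X=0$ then gives the strict inequality. Your second-order expansion is just the small-$t$ form of that concavity, and your uniform choice $X_1=P_{\K\xi}-\frac km\Id$ replaces the paper's case-by-case examples (for $\OO$ it equals $\tfrac12 S(a)$, the same kind of Clifford element).

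The only blemish is the ``fallback'' paragraph, which could not rescue the case $\gamma\le-1$: there the scalar function is convex or affine, and the trace is $\ge m$. You correctly rule that case out, so this is harmless.
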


\begin{proof}
There is a nonzero trace-free element of $\Ran\Pi$: in the complex and
quaternionic cases with $n\ge2$, one may take a real symmetric
$\K$-linear operator with $\K$-diagonal entries $1,-1,0,\ldots,0$; in the
Cayley case one may take any $S_\alpha$ from
\eqref{eq:clifford-system}.  For $X\in\Ran\Pi$,
$\cC(X)=(k-1)X$, so
\begin{equation}\label{eq:T-I-plus-sX}
  T=\Id+sX,
  \qquad s=(k-1)\beta-\alpha>0.
\end{equation}
The ball lower bound $\lambda_R\ge2(m+k)$ and $R\le\pi/4$ imply
\[
  0<s<\frac{2(k-1)}{\lambda_R}<1.
\]
After multiplying a fixed nonzero trace-free element of $\Ran\Pi$ by a
sufficiently small scalar, both $\Id+X$ and $\Id+sX$ are positive definite.

Let $x_1,\ldots,x_m$ be the eigenvalues of $X$.  Then
$\sum_i x_i=0$ and
\[
 \tr\bigl(T^{-1}(\Id+X)\bigr)
 =\sum_{i=1}^{m}h_s(x_i),
 \qquad h_s(t)=\frac{1+t}{1+st}.
\]
Because $0<s<1$,
\[
  h_s''(t)=-\frac{2s(1-s)}{(1+st)^3}<0
\]
on the relevant interval.  Strict concavity and $X\ne0$ give
\[
  \frac1m\sum_{i=1}^{m}h_s(x_i)
  <h_s\left(\frac1m\sum_i x_i\right)=h_s(0)=1,
\]
which is \eqref{eq:obstruction-trace}.  Taking
$\widehat M=\Id+X$ and $K=T$ proves the final assertion.
\end{proof}

The obstructing directions are compatible with genuine radial domains; the
point is that the exact ray integrals also contain nonlinear remainders,
which the affine comparators omit.

\begin{lemma}[Realization of small imbalance directions]
\label{lem:imbalance-realization}
Let $A\in\Sym(V)$ be trace-free.  For every $R$ strictly below the maximal
polar radius under consideration, there are smooth, centrally symmetric,
star-shaped domains $\Omega_\eps$ with $|\Omega_\eps|=|B_R|$ such that
\begin{equation}\label{eq:Z-realization}
  Z_\eps=\frac{2|\sph^{m-1}|}{m(m+2)}\eps A.
\end{equation}
For sufficiently small $|\eps|$, the domains remain in the same compact
localization ball whenever $R<\pi/8$.
\end{lemma}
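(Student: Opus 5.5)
The plan is to construct $\Omega_\eps$ as a radial graph over the sphere centered at $o$. For $\eps$ small we set
\[
  \Omega_\eps=\{\exp_o(r\xi):0\le r<\tau\bigl(Y_R(1+\eps\,\ip{A\xi}{\xi}/Y_R)\bigr)\},
\]
where $\tau$ is the inverse of $V(t)=\int_0^tJ$ from \cref{lem:ray-rearrangement}. Equivalently, the ray volume is prescribed directly:
\[
  Y_\eps(\xi)=Y_R+\eps\ip{A\xi}{\xi}.
\]

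The domain has the required basic properties. Since $\tau$ is smooth and strictly increasing near $Y_R$ (because $J(R)>0$ for $R$ below the maximal polar radius), the boundary radius $\tau(Y_\eps(\xi))$ is a smooth positive function of $\xi$ for small $|\eps|$. Hence $\Omega_\eps$ is a smooth star-shaped domain. It is centrally symmetric because $\ip{A\xi}{\xi}$ is even. The polar volume formula \eqref{eq:polar-volume} gives
\[
  |\Omega_\eps|=\int_{S(V)}Y_\eps\ds=|B_R|+\eps\int\ip{A\xi}{\xi}\ds=|B_R|,
\]
using the first identity of \eqref{eq:isotropy-identities} and $\tr A=0$.

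Next we compute $Z_\eps$ from \eqref{eq:Z-def}:
\[
  Z_\eps=\eps\int_{S(V)}\ip{A\xi}{\xi}\,\xi\otimes\xi\ds(\xi).
\]
By the standard fourth-moment identity $\int\xi_i\xi_j\xi_k\xi_l\ds=\frac{|\sph^{m-1}|}{m(m+2)}(\delta_{ij}\delta_{kl}+\delta_{ik}\delta_{jl}+\delta_{il}\delta_{jk})$, this equals
\[
  \frac{|\sph^{m-1}|}{m(m+2)}\eps\bigl((\tr A)\Id+2A\bigr)=\frac{2|\sph^{m-1}|}{m(m+2)}\eps A,
\]
which is \eqref{eq:Z-realization}. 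The moment identity itself I would verify by $O(m)$-invariance, which forces the displayed tensor form up to a constant, and then fix the constant by contracting with Gaussian integrals.

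It remains to check that $o$ is a valid Weinberger center, i.e. that \eqref{eq:center-vector} holds, so that $Z_\eps$ is genuinely the matrix attached to the chosen center. Central symmetry handles this: the map $\xi\mapsto-\xi$ preserves $\Omega_\eps$ and $G(r)$ and reverses $\xi$. In compact type this reflection is the geodesic symmetry at $o$, which is an isometry of the symmetric space, so the integral vanishes.

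For the localization claim, take $q=o$. The boundary radius tends uniformly to $R<\pi/8$ as $\eps\to0$, so $\Omega_\eps\subset B_{\pi/8}(o)=B_{i(M)/4}(q)$ for $|\eps|$ small.

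The main obstacle is minor: one must make sure the fourth-moment constant and the definition of $Z$ match, since $Z$ uses $Y-Y_R$ rather than a radial perturbation. Prescribing $Y_\eps$ directly rather than the boundary radius avoids all nonlinear corrections.
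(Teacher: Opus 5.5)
Your proposal is correct and follows the paper's own route. Like the paper, you prescribe the ray volume $Y_\eps(\xi)=Y_R+\eps\ip{A\xi}{\xi}$ and take the radial graph of $V^{-1}(Y_\eps)$; you then use $\tr A=0$ for the volume, the fourth-moment identity for $Z_\eps$, central symmetry for the Weinberger center, and uniform convergence of the boundary radius to $R$ for localization, with your extra details (smoothness of $\tau$ from $J(R)>0$, and the $\xi\mapsto-\xi$ cancellation in polar coordinates) simply filling in steps the paper states briefly.
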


\begin{proof}
Define the desired ray volume by
\[
  Y_\eps(\xi)=Y_R+\eps\ip{A\xi}{\xi}.
\]
For small $|\eps|$ this is positive and lies in the range of $V$.  Since
\[
  \int_{S(V)}\ip{A\xi}{\xi}\ds
  =\frac{|\sph^{m-1}|}{m}\tr A=0,
\]
the total volume is unchanged.  Put
$r_\eps(\xi)=V^{-1}(Y_\eps(\xi))$ and take the radial graph
$0<r<r_\eps(\xi)$.  It is smooth for small $\eps$ and centrally symmetric
because $Y_\eps(-\xi)=Y_\eps(\xi)$; in particular $o$ is a Weinberger
center.

The fourth-moment identity on the sphere is
\[
 \int_{S^{m-1}}\xi_i\xi_j\xi_p\xi_q\ds
 =\frac{|\sph^{m-1}|}{m(m+2)}
 (\delta_{ij}\delta_{pq}+\delta_{ip}\delta_{jq}
 +\delta_{iq}\delta_{jp}).
\]
Using $\tr A=0$ in the definition of $Z$ gives
\eqref{eq:Z-realization}.  The localization statement follows by uniform
convergence $r_\eps\to R$.
\end{proof}

\begin{remark}[Explicit Cayley obstruction]\label{rem:cayley-obstruction}
Let $M=\OP^2$ and
\[
  R_*<R<\frac\pi8,
  \qquad R_*=\arctan\frac1{\sqrt7}.
\]
Take $X=tS_0$, where $S_0$ is one member of the Clifford system and
$0<t<1$.  Then $X\in\Ran\Pi$, $\tr X=0$, and the eigenvalues of $X$ are
$t$ and $-t$, each with multiplicity eight.  Since $\cC(X)=7X$, put
$s=7\beta-\alpha\in(0,1)$.  Formula \eqref{eq:T-I-plus-sX} gives
\[
 \tr\bigl(T^{-1}(\Id+X)\bigr)
 =8\frac{1+t}{1+st}+8\frac{1-t}{1-st}<16.
\]
By \cref{lem:imbalance-realization}, the same matrix direction occurs for
arbitrarily small smooth star-shaped perturbations that remain inside the
original localization range.  This does not disprove the full spectral
inequality: the exact mass and stiffness matrices retain nonlinear raywise
remainders that are absent from the affine model.
\end{remark}

\section{Consequences and remaining question}\label{sec:consequences}

The improvement over the indices in \eqref{eq:MengWang-noncompact} and
\eqref{eq:MengWang-compact} is summarized in
\cref{tab:indices}.

\begin{table}[ht]
\centering
\caption{Indices in the sharp reciprocal-sum inequality.}
\label{tab:indices}
\small
\begin{tabularx}{\textwidth}{@{}>{\raggedright\arraybackslash}p{.28\textwidth}
 >{\raggedright\arraybackslash}X
 >{\raggedright\arraybackslash}X@{}}
\toprule
Geometry & Meng--Wang & Present result \\
\midrule
Noncompact rank one, all $k,n$
 & $p=m-k$ for $n>1$; $p=m-1$ for $n=1$
 & $\boldsymbol{p=m}$ \\
Compact, $k=1$
 & $\ell=m-1$
 & $\boldsymbol{\ell=m}$ \\
$\CP^n$
 & $\ell=n$
 & $\boldsymbol{\ell=2n=m}$ \\
$\HP^n$
 & $\ell=2n+1$
 & $\boldsymbol{\ell=4n=m}$ \\
$\OP^2$, $R\le R_*$
 & $\ell=11$
 & $\boldsymbol{\ell=16=m}$ \\
$\OP^2$, $R_*<R\le\pi/8$
 & $\ell=11$
 & $11$ unconditionally; $16$ if $\Pi Z=0$ \\
\bottomrule
\end{tabularx}
\end{table}

Thus the full first eigenspace is recovered in every noncompact rank-one
symmetric space.  Under the original compact localization, the same is true
for real, complex, and quaternionic projective geometry, and for the Cayley
plane up to the explicit threshold $R_*$.  The only remaining case left by
this method is
\[
  M=\OP^2,
  \qquad R_*<R\le\frac\pi8,
  \qquad \Pi Z\ne0.
\]
\Cref{prop:abstract-obstruction} shows that a proof in this interval must
use information beyond the affine tangent lines.  Natural possibilities
include a quantitative use of the strict concavity remainders on each ray,
a geometric restriction on the realizable mass--stiffness pair, or a trial
space incorporating higher angular structure.

\section*{Acknowledgments}
The author would like to express his sincere gratitude to his supervisor Prof. Bobo Hua for helpful suggestions and encouragement.

The author also thanks Prof. Kui Wang for his interest in this work and valuable feedback.

\section*{Data availability}
No data sets were generated or analyzed in this work.

\printbibliography[title={References}]

\end{document}